\documentclass[11pt,a4paper]{article}
\usepackage[margin=2.7cm]{geometry}
\usepackage{amsmath,amssymb,amsthm,mathtools,mathrsfs}
\usepackage{hyperref,enumitem}
\usepackage[T1]{fontenc}
\usepackage{lmodern}
\newcommand{\F}{\mathbb F}\newcommand{\G}{\mathbb G}\newcommand{\Pp}{\mathbb P}\newcommand{\E}{\mathbb E}\newcommand{\cS}{\mathcal S}\newcommand{\cH}{\mathcal H}\newcommand{\cT}{\mathcal T}
\newtheorem{theorem}{Theorem}[section]\newtheorem{proposition}[theorem]{Proposition}\newtheorem{lemma}[theorem]{Lemma}\theoremstyle{remark}\newtheorem{remark}[theorem]{Remark}\newtheorem{example}[theorem]{Example}\newtheorem{problem}[theorem]{Research problem}

\title{Backward and reflected backward stochastic differential equations under initial enlargement of Brownian filtrations}

\author{
	\textbf{Youssef Ouknine}\\[3pt]
	\small Affiliate Professor, Africa Business School,\\
	\small Mohammed VI Polytechnic University (UM6P),\\
	\small Rabat Campus, Rocade Rabat--Salé, 11103, Morocco\\
	\small Member, Hassan II Academy of Sciences and Technology,\\
	\small Km 4, Avenue Mohammed VI, Rabat, Morocco\\
	\small \url{youssef.ouknine@um6p.ma}\\
	\and
	\textbf{Badr Elmansouri}\\[3pt]
	\small Cadi Ayyad University,\\
	\small National School of Applied Sciences of Marrakech,\\
	\small BP 575, Avenue Abdelkrim Khattabi,\\
	\small 40000 Guéliz, Marrakech, Morocco\\
	\small \url{b.elmansouri@uca.ac.ma}
}

\date{\today}

\begin{document}\maketitle

\begin{abstract}
	We investigate the effect of initially enlarging
	a Brownian filtration by its terminal value
	on backward stochastic differential equations
	(BSDEs) and reflected BSDEs (RBSDEs).
	We establish the Brownian innovation
	representation and the predictable
	representation property in the enlarged
	filtration, highlighting the singular
	information drift arising from the
	semimartingale decomposition of the
	original Brownian motion.
	We derive transfer identities under
	suitable integrability conditions and
	provide explicit counterexamples showing
	that the resulting singular equations
	may fail to admit unique square-integrable
	solutions, including in the reflected case.
	We further develop a conditional
	parametrization of regular BSDEs and
	establish a conditional comparison
	principle for Snell envelopes, illustrating
	both the strict value of additional
	information and the failure of general
	pathwise domination.
	An exact projection identity is also
	obtained for zero-driver terminal payoffs.
	Finally, we discuss possible extensions
	to irregular obstacles and more general
	information structures.
\end{abstract}

\noindent\textbf{Keywords:} Backward stochastic differential equations; reflected BSDEs; initial enlargement of filtrations; Brownian bridge; martingale representation; information drift; Snell envelope; optimal stopping.\\
\textbf{2020 Mathematics Subject Classification:} Primary 60H10; Secondary 60G40, 60G44, 60G07.

\section{Introduction}

Backward stochastic differential equations (BSDEs)
provide a probabilistic framework for studying
stochastic control, nonlinear expectations, and
problems arising in mathematical finance.
Since the foundational work of Pardoux and Peng
\cite{PP}, existence and uniqueness results
have been established under suitable Lipschitz
and integrability assumptions, with further
extensions to $L^p$ solutions and financial
applications; see, for instance,
Briand et al.~\cite{BriandEtAl2003} and
El Karoui, Peng and Quenez
\cite{ElKarouiPengQuenez1997}.
A fundamental feature of these equations is
their dependence on the underlying filtration,
which determines both the available information
and the martingale representation used to
construct adapted solutions.

This dependence becomes particularly relevant
when the filtration is enlarged by additional
information. Such a situation arises naturally
when an agent has advance knowledge of a future
random outcome, while another agent observes
only the progressively revealed information.
The mathematical theory of enlargement of
filtrations provides a framework for describing
how this additional information affects
martingales and semimartingales; classical
references include Jeulin~\cite{Jeulin}
and Jacod~\cite{Jacod}.
However, the consequences for backward equations
require particular attention, since changing
the filtration may alter both the martingale
integrator and the adapted solution.

A complementary perspective concerns the
shrinkage of filtrations, where processes
adapted to a larger filtration $\G$ are
projected onto a smaller filtration $\F$.
Under suitable integrability assumptions,
the $\F$-optional projection of a $\G$-adapted process
$A_t=\int_0^t\vartheta_s\,ds$\footnote{Here, the process $\vartheta$ is a $\G$-adapted and integrable.} may satisfy
\[
\E[A_t\mid\mathcal F_t]
=
\int_0^t\E[\vartheta_s\mid\mathcal F_s]\,ds
+M_t,
\]
where $M$ is an $\F$-martingale.
Thus, projecting a finite-variation
process may introduce an additional
martingale component.
Related calculations and applications
to BSDEs are discussed in an unpublished
note by Jeanblanc~\cite{JeanblancPC}.

In this note, we consider the initial enlargement
of the completed natural filtration $\F$
of a Brownian motion $B$ by its terminal value
$B_T$, namely
$\mathcal G_t=\mathcal F_t\vee\sigma(B_T)$.
Although this enlargement provides complete
knowledge of the terminal Brownian position
at time zero, it does not eliminate the
randomness of the intermediate trajectory.
Indeed, the process
\[
W_t=B_t-\int_0^t\frac{B_T-B_s}{T-s}\,ds
\]
is a Brownian motion in $\G$, independent
of $B_T$, and the enlarged filtration admits
a predictable representation in terms of $W$.
On the other hand, the original Brownian
motion acquires the information drift
$\alpha_t=(B_T-B_t)/(T-t)$, which becomes
singular as $t$ approaches $T$.
This creates a distinction between BSDEs
formulated directly with respect to the
innovation process and equations obtained
by transferring the original Brownian
integrator to the enlarged filtration.

The distinction is important for well-posedness.
The classical BSDE theory applies to equations
driven by $W$ with uniformly Lipschitz
generators, whereas the transformed generator
$f(t,y,z)-z\alpha_t$ generally falls outside
this framework. In particular, the singular
information drift may lead to nonuniqueness,
even among square-integrable solutions for
which the drift integral is absolutely defined.
We illustrate this phenomenon by explicit
examples and establish transfer identities
under suitable integrability assumptions.
We also examine a conditional parametrization
of regular BSDEs, using the independence
between $B_T$ and $W$ to represent the
enlarged equation as a family of standard
Brownian BSDEs.

Reflected BSDEs introduce a further difficulty,
since the solution must satisfy an obstacle
constraint together with a minimality condition.
Their classical existence and uniqueness
theory is developed by El Karoui et al.
\cite{EKPQ}, with close connections to optimal
stopping and Snell envelopes.
In the enlarged filtration, the pathwise
reflection conditions may be preserved under
a suitable transfer, while the transformed
generator remains singular.
Moreover, additional terminal information
enlarges the class of admissible stopping
times but does not imply a general pathwise
comparison of the corresponding Snell
envelopes. We therefore investigate the
value of this information through conditional
expectations and provide explicit examples
illustrating both strict informational gains
and the failure of pathwise domination.

For irregular obstacles, the appropriate
framework involves optional strong
supermartingales and Mertens decompositions.
The works of Grigorova, Imkeller, Offen,
Ouknine and Quenez~\cite{GIOOQ}, and
Grigorova, Imkeller, Ouknine and
Quenez~\cite{GIOQ}, develop reflected BSDEs
and nonlinear optimal stopping problems
under weakened regularity assumptions.
Our complete reflected BSDE arguments are
restricted to continuous obstacles; the
extension to irregular optional obstacles
is discussed as a direction for further
research, particularly in relation to
singular information drifts and the
corresponding uniqueness questions.

The main emphasis of this paper is on the
distinction between regular BSDEs formulated
in the enlarged filtration and equations
obtained by transferring Brownian BSDEs
through the semimartingale decomposition
of the original driving noise. The terminal-information drift is singular
near maturity, and the resulting transferred
equations may fail to be unique even when
the corresponding equations in the original
filtration are well posed. We also examine the consequences of this
distinction for reflected equations,
optimal stopping, and financial claims
under asymmetric information.

The remainder of this paper is organized as follows.
Section \ref{sec2} introduces the enlarged Brownian
filtration and establishes the innovation and
martingale representation properties.
Section \ref{sec3} examines BSDEs under enlargement,
including transfer identities, well-posedness,
and nonuniqueness. Section \ref{sec4} develops a
conditional parametrization of regular BSDEs.
Section \ref{sec5} studies reflected BSDEs with continuous
obstacles, while Section \ref{sec6} investigates Snell
envelopes and the value of terminal information.
Section \ref{sec7} establishes an exact conditional
representation for endpoint payoffs.
Section \ref{sec:financial} presents financial
applications involving optimal exercise under
terminal information, reflected BSDE transfer,
and conditional valuation of European claims.
Finally, Section \ref{sec8} discusses irregular obstacles
and outlines directions for further research.

\section{The enlarged Brownian filtration}
\label{sec2}
We begin by introducing the Brownian
filtration enlargement that will serve
as the probabilistic framework throughout
this note. The main idea is to distinguish
between an observer who has access only
to the information progressively generated
by a Brownian motion and another observer
who possesses additional knowledge of its
terminal value from the initial time.

Such a situation arises naturally in
mathematical finance, particularly in models
of insider trading. Consider, for example,
a financial asset whose price dynamics are
driven by a Brownian motion. An ordinary
investor observes the evolution of the
market over time, whereas an informed
investor may possess advance knowledge
of a future market-related quantity.
In a simplified model, this additional
information can be represented by the
terminal Brownian value $B_T$.
Initial enlargement of filtrations provides
a natural mathematical description of
this information asymmetry; see
Jacod~\cite{Jacod}, Jeulin~\cite{Jeulin},
and the financial applications developed
by Amendinger, Imkeller and Schweizer
\cite{AmendingerImkellerSchweizer1998}.

More precisely, fix $T>0$ and let
$(\Omega,\mathcal F,\Pp)$ be a probability
space carrying a one-dimensional standard
Brownian motion $B=(B_t)_{0\le t\le T}$,
with $B_0=0$. Denote by
$\F=(\mathcal F_t)_{0\le t\le T}$
its completed natural filtration.

We introduce the initially enlarged filtration
$\G=(\mathcal G_t)_{0\le t\le T}$, defined by
\[
\mathcal G_t
=\mathcal F_t\vee\sigma(B_T),
\qquad 0\le t\le T,
\]
with the usual augmentation understood. 
In particular,
\[
\mathcal F_t\subseteq\mathcal G_t,
\qquad
\mathcal G_0=\sigma(B_T),
\]
up to completion.

Thus, $\mathcal F_t$ describes the
information available from observing
the Brownian path up to time $t$,
whereas $\mathcal G_t$ additionally
contains the terminal value $B_T$,
which is already known at time zero.
Although both filtrations are defined
on the same probability space, the
martingale properties of the original
Brownian motion are affected by this
additional information.

In particular, the process $B$ is no
longer a Brownian motion with respect
to $\G$. Its semimartingale decomposition
involves an information drift, given by
\begin{equation}\label{eq:alpha}
	\alpha_t=\frac{B_T-B_t}{T-t},
	\qquad
	W_t=B_t-\int_0^t\alpha_s\,ds,
	\quad 0\le t<T.
\end{equation}
The process $W$ will be shown to extend
to a Brownian motion in $\G$, independent
of $B_T$. The drift $\alpha$ captures
the effect of the additional terminal
information and becomes singular as
$t$ approaches $T$. Understanding its
integrability and the representation
properties of $W$ is therefore the
starting point for the analysis of
BSDEs under the enlarged filtration.

\begin{lemma}[Integrability of the information drift]
	\label{lem:drift}
	Almost surely $\int_0^T|\alpha_s|\,ds<\infty$, while
	$\E\int_0^T|\alpha_s|^2\,ds=\infty$.
	More precisely, for every $t<T$,
	$\E|\alpha_t|=\sqrt{2/\pi}(T-t)^{-1/2}$ and
	$\E[\alpha_t^2]=(T-t)^{-1}$.
\end{lemma}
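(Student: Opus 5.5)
The plan is to first compute the moments, then derive the two integrability claims from them. For fixed $t<T$, the increment $B_T-B_t$ is centered Gaussian with variance $T-t$. Hence $\alpha_t\sim N(0,(T-t)^{-1})$. For $\xi\sim N(0,\sigma^2)$ one has $\E|\xi|=\sigma\sqrt{2/\pi}$ and $\E\xi^2=\sigma^2$. This gives directly
\[
\E|\alpha_t|=\sqrt{2/\pi}\,(T-t)^{-1/2},
\qquad
\E[\alpha_t^2]=(T-t)^{-1}.
\]

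Next, the square-integrability failure. The map $(s,\omega)\mapsto\alpha_s(\omega)$ is jointly measurable on $[0,T)\times\Omega$, since $B$ has continuous paths. Tonelli's theorem therefore gives
\[
\E\int_0^T|\alpha_s|^2\,ds=\int_0^T (T-s)^{-1}\,ds=\infty .
\]

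For the almost sure finiteness, Tonelli gives in the same way
\[
\E\int_0^T|\alpha_s|\,ds=\sqrt{2/\pi}\int_0^T(T-s)^{-1/2}\,ds=2\sqrt{2/\pi}\,\sqrt{T}<\infty .
\]
Hence $\int_0^T|\alpha_s|\,ds$ is integrable, and in particular finite almost surely.

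The only point needing care is measurability and the treatment of the endpoint $s=T$, which has Lebesgue measure zero. Both are routine, so the expectation bound does all the work. An alternative, pathwise route would use Lévy's modulus or the law of the iterated logarithm at time $T$ (via time reversal), giving $|B_T-B_s|\le C(\omega)(T-s)^{1/2-\epsilon}$. Then $|\alpha_s|\le C(\omega)(T-s)^{-1/2-\epsilon}$, which is integrable for $\epsilon<1/2$. This is not needed, since the first-moment computation already suffices.
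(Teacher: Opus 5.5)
Your proposal is correct and follows the paper's proof essentially verbatim. You use the Gaussian law of $\alpha_t$ to get the two moments, then apply Tonelli to obtain $\E\int_0^T|\alpha_s|\,ds=2\sqrt{2T/\pi}<\infty$ (hence almost sure finiteness) and $\E\int_0^T|\alpha_s|^2\,ds=\int_0^T(T-s)^{-1}\,ds=\infty$. Your note on joint measurability is a small detail the paper leaves implicit, and the pathwise alternative is, as you say, unnecessary.
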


\begin{proof}
	Since $B_T-B_t$ is centered Gaussian with variance
	$T-t$, we have $\alpha_t\overset{d}{=}N/\sqrt{T-t}$,
	where $N$ is a standard normal random variable.
	Using $\E|N|=\sqrt{2/\pi}$ and $\E[N^2]=1$, we obtain
	\[
	\E|\alpha_t|
	=\sqrt{\frac{2}{\pi}}(T-t)^{-1/2},
	\qquad
	\E[\alpha_t^2]=(T-t)^{-1}.
	\]
	Since the process $(|\alpha_t|)_{0 \leq t \leq T}$ is nonnegative, Tonelli's theorem
	yields
	\[
	\E\int_0^T|\alpha_s|\,ds
	=\int_0^T\E|\alpha_s|\,ds
	=2\sqrt{\frac{2T}{\pi}}<\infty.
	\]
	As the random variable $\int_0^T|\alpha_s|\,ds$
	is nonnegative and has finite expectation, it is
	finite almost surely.
	
	Similarly, applying Tonelli's theorem to the
	nonnegative process $(|\alpha_t|^2)_{0 \leq t \leq T}$, we obtain
	\[
	\E\int_0^T|\alpha_s|^2\,ds
	=\int_0^T\frac{1}{T-s}\,ds
	=\infty,
	\]
	which completes the proof.
\end{proof}

\begin{theorem}[Innovation and representation]
	\label{thm:PRP}
	The process $W$ in \eqref{eq:alpha} extends continuously
	to $T$ and is a $\G$-Brownian motion. It is independent
	of $B_T$, and
	\begin{equation}\label{eq:recovery}
		B_t=\frac{t}{T}B_T
		+(T-t)\int_0^t\frac{dW_s}{T-s},
		\quad 0\le t<T.
	\end{equation}
	Consequently,
	$\mathcal G_t=\sigma(B_T)\vee\mathcal F_t^W$
	up to completion for $t<T$, and also at $T$.
	Moreover, every $\xi\in L^2(\mathcal G_T)$ admits
	the unique representation
	\begin{equation}\label{eq:PRP}
		\xi=\E[\xi\mid B_T]+\int_0^T H_s\,dW_s,
		\qquad H\in\cH^2(\G).
	\end{equation}
\end{theorem}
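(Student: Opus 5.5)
We first establish that $W$ is a $\G$-Brownian motion on $[0,T)$. The approach is the standard Gaussian conditioning argument combined with Lévy's characterization. By Lemma~\ref{lem:drift}, $\int_0^t\alpha_s\,ds$ is finite almost surely for every $t\le T$, so $W$ is a continuous $\G$-adapted process on $[0,T]$, with $W_T$ defined directly by the formula. This already gives the continuous extension to $T$.

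To show that $W$ is a $\G$-martingale, it suffices to check $\E[B_t-B_s\mid\mathcal G_s]=\E[\int_s^t\alpha_u\,du\mid\mathcal G_s]$ for $s<t<T$. Conditionally on $\mathcal G_s$, the increment $B_T-B_s$ is known. Since $(B_u-B_s)_{u\ge s}$ is independent of $\mathcal F_s$, Gaussian regression gives
\[
\E[B_u-B_s\mid\mathcal G_s]=\frac{u-s}{T-s}(B_T-B_s).
\]
Applying this at $u$ gives $\E[\alpha_u\mid\mathcal G_s]=\frac{T-u}{T-s}\cdot\frac{B_T-B_s}{T-u}=\frac{B_T-B_s}{T-s}$. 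Integrating over $u\in[s,t]$ via conditional Fubini, which is justified by Lemma~\ref{lem:drift}'s bound $\E|\alpha_u|\lesssim(T-u)^{-1/2}$, matches the left-hand side. The same bound gives integrability up to $T$, so the martingale property extends to $[0,T]$ by $L^1$ continuity.

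The quadratic variation is $\langle W\rangle_t=\langle B\rangle_t=t$, since the drift has finite variation. Lévy's theorem then shows that $W$ is a $\G$-Brownian motion. Because $\mathcal G_0=\sigma(B_T)$ and a $\G$-Brownian motion is independent of $\mathcal G_0$, $W$ is independent of $B_T$.

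For \eqref{eq:recovery}, set $X_t=B_t-\frac tT B_T$ on $[0,T)$. Then $dX_t=dW_t+\big(\alpha_t-\frac{B_T}{T}\big)dt$. Since $\alpha_t-\frac{B_T}{T}=\frac{B_T-B_t}{T-t}-\frac{B_T}{T}=-\frac{X_t}{T-t}$, $X$ solves the linear SDE
\[
dX_t=-\frac{X_t}{T-t}\,dt+dW_t,\qquad X_0=0.
\]
Itô's product rule applied to $X_t/(T-t)$ gives $d\big(X_t/(T-t)\big)=dW_t/(T-t)$, which yields \eqref{eq:recovery}.

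Consequently $B_t$ is measurable with respect to $\sigma(B_T)\vee\mathcal F^W_t$ for $t<T$. Conversely, $W_t$ is a functional of $B_{\cdot\wedge t}$ and $B_T$. Hence $\mathcal G_t=\sigma(B_T)\vee\mathcal F_t^W$ for $t<T$. At $T$ both sides contain $B_T$ and all of $\mathcal F^W_T$ and $\mathcal F_T$, by left continuity of the generated $\sigma$-fields (using $W_T=\lim W_t$ and $B_T$ known), so the identity also holds at $T$.

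For \eqref{eq:PRP}, independence gives $\mathcal G_T=\sigma(B_T)\vee\mathcal F^W_T$ with the two factors independent, so $L^2(\mathcal G_T)$ is the closed span of products $g(B_T)\,h$ with $h\in L^2(\mathcal F^W_T)$. For such a product, the Brownian martingale representation of $h=\E h+\int_0^T k_s\,dW_s$ in $\F^W$ gives
\[
g(B_T)\,h=g(B_T)\E h+\int_0^T g(B_T)k_s\,dW_s.
\]
The integrand $g(B_T)k$ is $\G$-predictable because $B_T$ is $\mathcal G_0$-measurable. The first term equals $\E[g(B_T)h\mid B_T]$ by independence. The map $\xi\mapsto(\E[\xi\mid B_T],H)$ is an isometry onto $L^2(\sigma(B_T))\oplus\cH^2(\G)$: the cross term $\E\big[\E[\xi\mid B_T]\int H\,dW\big]$ vanishes because $\int H\,dW$ is a $\G$-martingale started at $0$ and $\E[\xi\mid B_T]$ is $\mathcal G_0$-measurable. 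Hence the representation passes to $L^2$-limits, and uniqueness follows from the Itô isometry.

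The main obstacle is the measure-theoretic care at the endpoint $T$: the extension of the martingale property to $T$ and the filtration identity at $T$. The integrable bound from Lemma~\ref{lem:drift} handles the former, and the latter follows since $B_T$ is in both generating families.
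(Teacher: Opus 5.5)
Your proposal is correct and follows essentially the same route as the paper: Gaussian regression with conditional Fubini and L\'evy's characterization for the innovation, the product rule applied to a rescaled bridge-type process for \eqref{eq:recovery}, and density of products $g(B_T)h$ combined with the Brownian representation theorem and the It\^o isometry for \eqref{eq:PRP}. The one minor difference is at the endpoint. You extend the martingale property to $T$ by $L^1$ convergence, using $\E|\alpha_u|\lesssim (T-u)^{-1/2}$, and then apply L\'evy's theorem directly on $[0,T]$. The paper instead applies L\'evy on $[0,T)$ and passes the independent Gaussian increments to the almost-sure limit $W_T$. Both arguments are valid.
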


\begin{proof}
	We first show that $W$ is a $\G$-Brownian motion.
	For $0\le s\le u<T$, the Gaussian regression
	formula, applied to the jointly Gaussian
	increments $B_T-B_u$ and $B_T-B_s$, gives
	\[
	\E[B_T-B_u\mid\mathcal G_s]
	=\frac{T-u}{T-s}(B_T-B_s).
	\]
	
	It follows that, for $s\le u<T$,
	$\E[\alpha_u\mid\mathcal G_s]
	=(B_T-B_s)/(T-s)$. Hence, by conditional
	Fubini's theorem, for $s<t<T$,
	\[
	\E\left[\int_s^t\alpha_u\,du
	\Bigm|\mathcal G_s\right]
	=\int_s^t\E[\alpha_u\mid\mathcal G_s]\,du
	=\frac{t-s}{T-s}(B_T-B_s).
	\]
	It follows that
	$\E[W_t-W_s\mid\mathcal G_s]=0$.
	Thus $W$ is a continuous $\G$-martingale on
	every compact subinterval of $[0,T)$.
	Since $\int_0^t\alpha_s\,ds$ is continuous and has finite
	variation (from Lemma \ref{lem:drift}) and $[B]_t=t$, we have $[W]_t=t$.
	By L\'evy's characterization, $W$ is a
	$\G$-Brownian motion on $[0,T)$ (see Theorem 39 in \cite[Ch II]{Protter2004}).
	
	By Lemma~\ref{lem:drift},
	$\int_0^T|\alpha_s|\,ds<\infty$ almost surely.
	Therefore, the continuity of $B$ implies that
	$W_t$ converges almost surely as $t\uparrow T$ to
	$W_T=B_T-\int_0^T\alpha_s\,ds$. Moreover, for $s<t<T$, the increment $W_t-W_s$
is independent of $\mathcal G_s$ and has distribution
$\mathcal N(0,t-s)$. Since $W_t\to W_T$ almost surely
as $t\uparrow T$, the convergence of Gaussian laws
and the preservation of independence under almost
sure convergence imply that $W_T-W_s$ is independent
of $\mathcal G_s$ and has distribution
$\mathcal N(0,T-s)$. Hence, $W$ is a
$\G$-Brownian motion on $[0,T]$.
Since $B_T$ is $\mathcal G_0$-measurable,
the Brownian property also implies that
$W$ is independent of $B_T$.

	Next, setting $U_t=B_T-B_t$, we obtain
	$dU_t=-dW_t-U_t(T-t)^{-1}dt$.
	Applying the product rule to $U_t/(T-t)$
	gives
	$$
	d\big(\frac{U_t}{T-t}\big)
	=-\frac{dW_t}{T-t}.
	$$
	Since $U_0=B_T$, integration yields
	\eqref{eq:recovery}. As $W$ is constructed
	from $B$ and $B_T$, we have
	$\sigma(B_T)\vee\mathcal F_t^W
	\subseteq\mathcal G_t$.
	The reverse inclusion follows from
	\eqref{eq:recovery}. The identity at $T$
	follows by continuity and completion.

	It remains to establish the predictable
	representation property.\footnote{
		For related results on martingale representation
		under initial enlargement of filtrations,
		see \cite{Amendinger200}.
	} By the filtration identity,
	$\mathcal G_T=\sigma(B_T)\vee\mathcal F_T^W$.
	Since $W$ and $B_T$ are independent,
	finite linear combinations of random
	variables of the form
	$\xi=h(B_T)\eta$, with bounded measurable
	$h$ and $\eta\in L^2(\mathcal F_T^W)$,
	are dense in $L^2(\mathcal G_T)$.
	
	By the classical Brownian martingale
	representation theorem, we may write
	$\eta=\E[\eta]+\int_0^T H_s^\eta\,dW_s$,
	where $H^\eta$ is $\mathcal F^W$-predictable
	and square-integrable. Thus
	\[
	\xi=\E[\xi\mid B_T]
	+\int_0^T h(B_T)H_s^\eta\,dW_s.
	\]
	The integrand is $\G$-predictable and
	belongs to $\cH^2(\G)$.
	
	For an arbitrary $\xi\in L^2(\mathcal G_T)$,
	choose a sequence of such finite linear
	combinations converging to $\xi$ in $L^2$.
	The conditional expectation is an
	orthogonal projection in $L^2$, and
	the It\^o isometry shows that the
	corresponding integrands form a Cauchy
	sequence in $\cH^2(\G)$. Passing to the
	limit gives \eqref{eq:PRP}.
	
	Finally, if two integrands yield the
	same representation, the It\^o isometry
	implies that their difference vanishes
	in $\cH^2(\G)$. This proves uniqueness.
\end{proof}

\begin{remark}
	Unlike the classical Brownian martingale representation,
	the initial term in \eqref{eq:PRP} need not be deterministic,
	since $\mathcal G_0=\sigma(B_T)$ up to completion.
	For instance, $\E[B_T\mid\mathcal G_t]=B_T$ for every
	$t\in[0,T]$, whereas
	$\E[B_T\mid\mathcal F_t]=B_t$.
	Thus, the same terminal random variable gives rise
	to different martingales in the two filtrations.
\end{remark}

\section{BSDEs and transfer between filtrations}
\label{sec3}
Write $\cS^2(\mathbb H)$ for the space of continuous
$\mathbb H$-adapted processes $Y$ satisfying
$\E\sup_{t\le T}|Y_t|^2<\infty$, and
$\cH^2(\mathbb H)$ for the space of
$\mathbb H$-predictable processes $Z$ satisfying
$\E\int_0^T|Z_s|^2\,ds<\infty$.

Let $\xi\in L^2(\mathcal F_T)$ and let $f$ be
$\F$-progressively measurable, uniformly Lipschitz
in $(y,z)$, with
$\E(\int_0^T|f(s,0,0)|\,ds)^2<\infty$.
Under these standard assumptions, the BSDE
\begin{equation}\label{eq:FBSDE}
	Y_t^\F=\xi+\int_t^T f(s,Y_s^\F,Z_s^\F)\,ds
	-\int_t^T Z_s^\F\,dB_s
\end{equation}
admits a unique solution in
$\cS^2(\F)\times\cH^2(\F)$ 
(see Pardoux and Peng~\cite{PP}).
.

\begin{proposition}[Transfer identity]
	\label{prop:transfer}
	Suppose $(Y^\F,Z^\F)$ solves \eqref{eq:FBSDE}
	and satisfies
	\begin{equation}\label{eq:transfercondition}
		\int_0^T|Z_s^\F\alpha_s|\,ds<\infty
		\quad\text{almost surely}.
	\end{equation}
	Then the same pair satisfies, in the enlarged
	filtration $\G$,
	\begin{equation}\label{eq:transferred}
		Y_t^\F=\xi+\int_t^T
		\big[f(s,Y_s^\F,Z_s^\F)-Z_s^\F\alpha_s\big]\,ds
		-\int_t^T Z_s^\F\,dW_s.
	\end{equation}
	The identity holds up to indistinguishability.
	No uniqueness for \eqref{eq:transferred}
	is asserted.
\end{proposition}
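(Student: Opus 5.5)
The plan is to substitute the $\G$-semimartingale decomposition $dB_s=dW_s+\alpha_s\,ds$ from \eqref{eq:alpha} and Theorem~\ref{thm:PRP} into the stochastic integral $\int_t^T Z_s^\F\,dB_s$. The key point is that an $\F$-predictable process remains $\G$-predictable, because $\mathcal F_t\subseteq\mathcal G_t$. Moreover, the stochastic integral of such a process against $B$ is the same whether computed in $\F$ or in $\G$, provided the integral against the $\G$-semimartingale $B$ is well defined.

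First I will check that $Z^\F$ is integrable with respect to both parts of the $\G$-decomposition of $B$. For the martingale part, $Z^\F\in\cH^2(\F)\subseteq\cH^2(\G)$, so $\int Z^\F\,dW$ is a square-integrable $\G$-martingale on $[0,T]$. For the finite-variation part, hypothesis \eqref{eq:transfercondition} gives exactly that $\int_0^T Z^\F_s\alpha_s\,ds$ converges absolutely almost surely. Hence $Z^\F$ is integrable with respect to the $\G$-semimartingale $B=W+\int\alpha\,ds$ on the closed interval $[0,T]$. Note that $B$ is a $\G$-semimartingale on $[0,T]$ by Lemma~\ref{lem:drift} and Theorem~\ref{thm:PRP}, since $W$ extends to $T$ and $\int_0^T|\alpha_s|\,ds<\infty$. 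Consequently
\[
\int_0^t Z_s^\F\,dB_s=\int_0^t Z_s^\F\,dW_s+\int_0^t Z_s^\F\alpha_s\,ds,\qquad 0\le t\le T .
\]

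Next I must justify that the left-hand side, computed in $\G$, coincides with the $\F$-Itô integral appearing in \eqref{eq:FBSDE}. I will use the standard invariance of stochastic integrals under filtration enlargement, which holds when the integrand is predictable for the smaller filtration and the integrator is a semimartingale in both (Protter, Ch.~IV, Thm.~33 type). To argue it directly, I will approximate $Z^\F$ by simple $\F$-predictable processes $Z^n\to Z^\F$ in $\cH^2(\F)$. For simple integrands both integrals are the same Riemann sums. The $\F$-integrals converge in $\cS^2$ by the Itô isometry, and the $W$-integrals converge in $\cS^2$ likewise. The drift integrals are the delicate part: $\int Z^n\alpha\,ds$ need not converge to $\int Z^\F\alpha\,ds$ merely from $\cH^2$ convergence. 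I will therefore localize. On $[0,T-\varepsilon]$ the drift is $L^2$-bounded, with $\E\int_0^{T-\varepsilon}\alpha_s^2\,ds<\infty$, so Cauchy--Schwarz gives convergence in probability there, and the identity holds on $[0,T-\varepsilon]$ for each $\varepsilon$. Letting $\varepsilon\downarrow0$, continuity of both sides in $t$, together with \eqref{eq:transfercondition} (dominated convergence for the $ds$-term) and continuity of $\int Z^\F dW$ at $T$, extends the identity to $t=T$.

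Finally, I will subtract the identity at times $t$ and $T$ to obtain $\int_t^T Z^\F dB=\int_t^T Z^\F dW+\int_t^T Z^\F\alpha\,ds$, and insert this into \eqref{eq:FBSDE}, which yields \eqref{eq:transferred} for each $t$ almost surely. Both sides are continuous in $t$, so the identity holds up to indistinguishability. I expect the main obstacle to be the passage to $t=T$, where $\alpha$ is not square-integrable. The localization on $[0,T-\varepsilon]$ followed by the continuity argument, which uses only the almost sure hypothesis \eqref{eq:transfercondition}, is how I intend to handle it.
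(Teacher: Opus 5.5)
Your proposal is correct and follows the same route as the paper. Both substitute $B=W+\int_0^\cdot\alpha_s\,ds$ into $\int Z^\F\,dB$ on $[0,t]$ for $t<T$, then let $t\uparrow T$, using continuity of $\int Z^\F\,dW$ (since $Z^\F\in\cH^2(\G)$) and hypothesis \eqref{eq:transfercondition} for the drift term. The one addition is that you explicitly show why the $\F$-It\^o integral in \eqref{eq:FBSDE} coincides with $\int Z^\F\,dW+\int Z^\F\alpha\,ds$, using simple $\F$-predictable approximations and the bound $\E\int_0^{T-\varepsilon}\alpha_s^2\,ds=\log(T/\varepsilon)<\infty$; the paper takes this for granted when it invokes the decomposition of stochastic integrals with respect to semimartingales.
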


\begin{proof}
	By Theorem~\ref{thm:PRP}, we have
	$B_t=W_t+\int_0^t\alpha_s\,ds$ as
	$\G$-semimartingales. Since $\F\subseteq\G$,
	the process $Z^\F$ is also $\G$-predictable.
	Hence, by the decomposition of stochastic
	integrals with respect to semimartingales,
	for every $t<T$,
	$$
	\int_0^t Z_s^\F\,dB_s
	=\int_0^t Z_s^\F\,dW_s
	+\int_0^t Z_s^\F\alpha_s\,ds.
	$$
	Moreover, $Z^\F\in\cH^2(\G)$, so the stochastic
	integral with respect to $W$ extends to $T$.
	The finite-variation integral also converges
	absolutely by \eqref{eq:transfercondition}.
	Passing to the limit as $t\uparrow T$ therefore
	gives the same identity at $T$.
	
	Substituting this decomposition into
	\eqref{eq:FBSDE} yields \eqref{eq:transferred}
	for all $t\in[0,T]$, up to indistinguishability.

	No uniqueness is asserted for \eqref{eq:transferred},
	since the transformed generator
	$\widetilde f(t,y,z)=f(t,y,z)-z\alpha_t$
	does not satisfy the standard uniform
	Lipschitz condition. In fact, uniqueness
	may fail even in
	$\cS^2(\G)\times\cH^2(\G)$,
	as shown in Theorem~\ref{thm:nonuniq}.
\end{proof}

\begin{remark}[A useful sufficient condition]
	If $|Z_t^{\F}|\le C$ almost everywhere,
	then \eqref{eq:transfercondition} holds by
	Lemma~\ref{lem:drift}, since
	\[
	\int_0^T|Z_s^{\F}\alpha_s|\,ds
	\le C\int_0^T|\alpha_s|\,ds<\infty
	\quad\text{almost surely}.
	\]
	The assumption $Z^{\F}\in\cH^2(\F)$ alone
	does not ensure this integrability.
	Indeed, the usual Cauchy--Schwarz argument
	is insufficient, since
	$\E\int_0^T|\alpha_s|^2\,ds=\infty$ by Lemma~\ref{lem:drift}.
\end{remark}

\begin{proposition}[Well-posedness of a regular enlarged BSDE]
	Let $\xi\in L^2(\mathcal G_T)$ and let $g$ be
	$\G$-progressively measurable, uniformly Lipschitz
	in $(y,z)$, with
	$\E(\int_0^T|g(s,0,0)|\,ds)^2<\infty$.
	Then the BSDE
	\begin{equation}\label{eq:GBSDE}
		Y_t^{\G}=\xi+\int_t^Tg(s,Y_s^{\G},Z_s^{\G})\,ds
		-\int_t^TZ_s^{\G}\,dW_s
	\end{equation}
	admits a unique solution in
	$\cS^2(\G)\times\cH^2(\G)$.
\end{proposition}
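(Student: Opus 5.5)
The plan is to repeat the Pardoux--Peng fixed-point argument in the filtration $\G$. Its only non-standard ingredient is the martingale representation, and Theorem~\ref{thm:PRP} provides exactly that. The point to keep in mind is that $\mathcal G_0=\sigma(B_T)$ is not trivial, so the representation \eqref{eq:PRP} carries a random initial term $\E[\xi\mid B_T]$. Any $\G$-martingale $M_t=\E[\zeta\mid\mathcal G_t]$ with $\zeta\in L^2(\mathcal G_T)$ can be written as $M_t=\E[\zeta\mid B_T]+\int_0^t H_s\,dW_s$ with $H\in\cH^2(\G)$. This follows by conditioning \eqref{eq:PRP} on $\mathcal G_t$, using that the stochastic integral is a square-integrable $\G$-martingale and that $\E[\zeta\mid B_T]$ is $\mathcal G_0$-measurable. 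This is all the classical proof needs, since the initial value of the martingale never enters the BSDE; only the increments $M_T-M_t=\int_t^T H_s\,dW_s$ appear.

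The first step is the case of a driver that does not depend on the solution. Given $(U,V)\in\cS^2(\G)\times\cH^2(\G)$, set $\zeta=\xi+\int_0^T g(s,U_s,V_s)\,ds$. The Lipschitz property and the assumption $\E(\int_0^T|g(s,0,0)|ds)^2<\infty$ give $\zeta\in L^2(\mathcal G_T)$. Apply the representation above to $M_t=\E[\zeta\mid\mathcal G_t]$ to obtain $Z\in\cH^2(\G)$, and define
\[
Y_t=M_t-\int_0^t g(s,U_s,V_s)\,ds .
\]
Then $(Y,Z)$ solves \eqref{eq:GBSDE} with driver frozen at $(U,V)$. Doob's $L^2$ inequality gives $Y\in\cS^2(\G)$, and $Y$ is continuous because $M$ is a stochastic integral plus a $\mathcal G_0$-measurable constant. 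This defines a map $\Phi(U,V)=(Y,Z)$.

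The second step is to show that $\Phi$ is a contraction on $\cS^2(\G)\times\cH^2(\G)$ under the weighted norm $\E\int_0^T e^{\beta s}(|Y_s|^2+|Z_s|^2)\,ds$. For two inputs, apply Itô's formula to $e^{\beta t}|\delta Y_t|^2$ and use the Lipschitz bound together with Young's inequality. For $\beta$ large, depending only on the Lipschitz constant, the contraction constant is at most $1/2$. The local martingale term $\int e^{\beta s}\delta Y_s\delta Z_s\,dW_s$ is a true martingale by BDG, since $\delta Y\in\cS^2$ and $\delta Z\in\cH^2$. The Banach fixed point theorem then yields a solution, and the $\cS^2$ norm is recovered from the equation through BDG. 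For uniqueness, any two solutions are fixed points of $\Phi$, so they coincide by the same Itô estimate.

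I expect the main, if modest, obstacle to be the representation step, because the initial sigma-field is non-trivial. I will check carefully that the $\E[\xi\mid B_T]$ term is absorbed into $Y_0$: it gives $Y_0=\E[\zeta\mid B_T]$, which is $\mathcal G_0$-measurable, rather than a constant. I will also check that no other step of the classical proof uses triviality of $\mathcal G_0$. Everything else is the standard Lipschitz theory, which transfers verbatim since $W$ is a $\G$-Brownian motion by Theorem~\ref{thm:PRP}.
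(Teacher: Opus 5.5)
Your proposal is correct and follows the same route as the paper: the classical Pardoux--Peng fixed-point argument carried out in $\G$, with Theorem~\ref{thm:PRP} supplying the representation and the nontrivial $\mathcal G_0$ absorbed into the $\mathcal G_0$-measurable value $Y_0$. You write out the details the paper only cites. One small precision: the weighted norm is not complete on $\cS^2(\G)\times\cH^2(\G)$. Run the contraction on the weighted $\cH^2\times\cH^2$ space instead; your frozen-driver step works there unchanged. Then recover membership in $\cS^2(\G)$ from the equation via BDG, as you already indicate.
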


\begin{proof}
	By Theorem~\ref{thm:PRP}, $W$ is a
	$\G$-Brownian motion with the predictable
	representation property. The classical
	existence and uniqueness argument for
	Lipschitz BSDEs, based on Picard iteration
	and a priori estimates (see
	Pardoux and Peng~\cite{PP},
	El Karoui et al.~\cite{ElKarouiPengQuenez1997},
	and Briand et al.~\cite{BriandEtAl2003}),
	therefore extends to the present filtration.
	Indeed, the martingale representation
	property and the usual stochastic integral
	estimates remain valid in $\G$.
	The possibly nontrivial initial sigma-field
	$\mathcal G_0$ does not affect the contraction
	argument, and uniqueness follows from
	the standard stability estimates.
\end{proof}

\begin{example}[Different information, different solutions]
	\label{ex:linear}
	Let $\xi=B_T$ and $f=g=0$. In the natural
	filtration $\F$, the unique solution is
	$(Y^{\F},Z^{\F})=(B,1)$, since
	$\E[B_T\mid\mathcal F_t]=B_t$.
	In contrast, as $B_T$ is $\mathcal G_0$-measurable,
	the unique $\G$-solution driven by $W$ is
	$(Y^{\G},Z^{\G})=(B_T,0)$.

	More generally, for any Borel function $h$
	such that $h(B_T)\in L^2$, the zero-driver
	solutions satisfy
	\[
	Y_t^{\F}=\E[h(B_T)\mid\mathcal F_t],
	\qquad
	Y_t^{\G}=h(B_T).
	\]
	If $h$ is bounded and measurable, then
	$Y_t^{\F}=P_{T-t}h(B_t)$, where
	$P_sh(x)=\E[h(x+B_s)]$ denotes the
	Brownian heat semigroup (see e.g. \cite{KaratzasShreve1991}).
\end{example}

\begin{theorem}[Nonuniqueness for the transferred singular generator]
	\label{thm:nonuniq}
	For the terminal condition $\xi=B_T$, the BSDE
	in the enlarged filtration $\G$,
	\begin{equation}\label{eq:singular}
		Y_t^{\G}
		=B_T-\int_t^T Z_s^{\G}\alpha_s\,ds
		-\int_t^T Z_s^{\G}\,dW_s,
	\end{equation}
	admits at least two distinct solutions in
	$\cS^2(\G)\times\cH^2(\G)$, with absolutely
	integrable drift, namely
	\[
	(Y^{\G,1},Z^{\G,1})=(B,1),
	\qquad
	(Y^{\G,2},Z^{\G,2})=(B_T,0).
	\]
\end{theorem}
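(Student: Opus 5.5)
The plan is to verify directly that each of the two candidate pairs belongs to $\cS^2(\G)\times\cH^2(\G)$, has an absolutely integrable drift term, and satisfies \eqref{eq:singular} for every $t\in[0,T]$. Distinctness is then immediate, since $B_t\neq B_T$ with positive probability for every $t<T$; in particular $Y^{\G,1}_0=0\neq B_T=Y^{\G,2}_0$ almost surely on $\{B_T\neq0\}$.

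For the second pair $(B_T,0)$ everything is trivial. The process $Y_t=B_T$ is $\mathcal G_0$-measurable, hence $\G$-adapted and continuous, and $\E\sup_t|Y_t|^2=\E B_T^2=T<\infty$. With $Z=0$ both integrals in \eqref{eq:singular} vanish, so the equation reduces to $B_T=B_T$.

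For the first pair $(B,1)$ I will use Theorem~\ref{thm:PRP} rather than redo the computation. The process $B$ is continuous and $\F$-adapted, hence $\G$-adapted. By Doob's inequality, $\E\sup_t B_t^2\le 4T$. The constant $Z\equiv1$ is $\G$-predictable with $\E\int_0^T1\,ds=T$. The drift integral $\int_0^T|\alpha_s|\,ds$ is finite almost surely, and even in $L^1$, by Lemma~\ref{lem:drift}; this is the ``absolutely integrable drift'' requirement. Alternatively, one can note that $(B,1)$ is the $\F$-solution from Example~\ref{ex:linear}, and the bounded-$Z$ remark guarantees \eqref{eq:transfercondition}, so Proposition~\ref{prop:transfer} with $f=0$ gives \eqref{eq:singular} directly.

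For a self-contained check, the relation $W_t=B_t-\int_0^t\alpha_s\,ds$ from Theorem~\ref{thm:PRP}, extended to $t=T$ by the a.s. convergence established there, gives
\[
B_T-B_t=(W_T-W_t)+\int_t^T\alpha_s\,ds .
\]
Rearranging yields $B_t=B_T-\int_t^T\alpha_s\,ds-\int_t^T dW_s$, which is \eqref{eq:singular} with $Z\equiv1$.

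The only delicate point is the endpoint $t=T$ and the singularity of $\alpha$. Lemma~\ref{lem:drift} gives $\E\int_0^T\alpha_s^2\,ds=\infty$, so the drift cannot be controlled in $L^2$. It must instead be handled through almost-sure absolute integrability, together with the continuous extension of $W$ to $T$, both of which were established earlier. I would also remark that uniqueness fails because the generator $-z\alpha_t$ is not uniformly Lipschitz. This means the proposition on well-posedness of regular enlarged BSDEs does not apply, so there is no contradiction with it.
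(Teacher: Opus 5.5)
Your proposal is correct and follows essentially the same route as the paper. For $(B,1)$, you rearrange $B_T-B_t=\int_t^T\alpha_s\,ds+W_T-W_t$ from Theorem~\ref{thm:PRP}; $(B_T,0)$ is trivial by $\mathcal G_0$-measurability; membership in $\cS^2(\G)\times\cH^2(\G)$ follows from Doob's inequality, absolute drift convergence from Lemma~\ref{lem:drift}, and distinctness from $\Pp(B_T\neq B_t)>0$ (the paper notes this probability equals $1$), while your alternative via Proposition~\ref{prop:transfer} and the bounded-$Z$ remark is a valid shortcut but not needed.
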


\begin{proof}
	By the semimartingale decomposition established
	in Theorem~\ref{thm:PRP}, we have
	\[
	B_T-B_t=\int_t^T\alpha_s\,ds+W_T-W_t.
	\]
	Rearranging this identity shows that
	$(Y^{\G,1},Z^{\G,1})=(B,1)$ satisfies
	\eqref{eq:singular}. The second pair
	$(Y^{\G,2},Z^{\G,2})=(B_T,0)$ also satisfies
	the equation, since both integrals vanish
	and $B_T$ is $\mathcal G_0$-measurable.
	
	Both pairs belong to
	$\cS^2(\G)\times\cH^2(\G)$.
	Indeed, $B$ is continuous and $\G$-adapted,
	and Doob's maximal inequality, applied
	in the natural filtration $\F$, yields
	$\E\sup_{t\le T}|B_t|^2\le 4T$.
	Moreover,
	$\E|B_T|^2=T$ and
	$\E\int_0^T1^2\,ds=T$.	Moreover, the drift integrals are absolutely
	convergent, since
	$\int_0^T|\alpha_s|\,ds<\infty$ almost surely
	by Lemma~\ref{lem:drift} for the first solution,
	while the drift vanishes for the second.

	Finally, for every deterministic $t<T$,
	$B_T-B_t\sim\mathcal N(0,T-t)$, so
	$\mathbb P(B_T\ne B_t)=1$.
	Hence the two solutions are distinct,
	which proves nonuniqueness.
\end{proof}

\begin{remark}
	This result does not contradict the classical
	uniqueness theorem for uniformly Lipschitz BSDEs.
	Indeed, the transformed generator
	$\widetilde f(t,y,z)=-z\alpha_t$
	has the random Lipschitz coefficient
	$|\alpha_t|$, which cannot be bounded by
	a deterministic constant uniformly in
	$t\in[0,T)$.
	Thus, the standard uniqueness theorem
	is not applicable. In particular,
	square integrability of $(Y^{\G},Z^{\G})$
	and absolute integrability of the drift
	do not suffice to ensure uniqueness
	for the transferred equation.
\end{remark}

\section{Conditional decomposition and parametrization}
\label{sec4}
The product structure of the enlarged filtration
provides a convenient framework for studying
BSDEs. Let $\mu$ denote the law of $B_T$ and
let $(\Omega_W,\mathcal F^W,\mathbb P_W)$ be
the canonical Wiener space, with
$\Omega_W=C_0([0,T],\mathbb R)$.
By Theorem~\ref{thm:PRP}, the random variable
$B_T$ and the innovation process $W$ are
independent, and
$\mathcal G_t=\sigma(B_T)\vee\mathcal F_t^W$
up to completion. Consequently, the pair
$(B_T,W)$ has law $\mu\otimes\mathbb P_W$,
and we may work on the product probability
space
\[
(\overline\Omega,\overline{\mathcal F},
\overline{\mathbb P})
=
(\mathbb R\times\Omega_W,\,
\mathcal B(\mathbb R)\otimes\mathcal F^W,\,
\mu\otimes\mathbb P_W),
\]
equipped with the completed filtration generated
by the two canonical coordinates.

This representation separates the initial
information $B_T$ from the Brownian innovation
$W$. In particular, if $g(t,\omega,y,z)$ is
jointly measurable, $\G$-progressive in
$(t,\omega)$, and uniformly Lipschitz in
$(y,z)$, 
By the measurable functional representation
principle (see Kallenberg~\cite[Lemma~1.13]
{Kallenberg2002}), together with the
product structure of the progressive
sigma-field, the driver admits a jointly
measurable parametrization
$g^x(t,\omega_W,y,z)$ such that
$$
g(t,\omega,y,z)
=g^{B_T(\omega)}(t,W(\omega),y,z),
$$
up to the appropriate null-set identifications.
For $\mu$-almost every $x$, the process
$g^x$ is progressively measurable with
respect to $\F^W$; the Brownian filtration generated by $W$.

The measurable factorization used here is
a consequence of standard results on
measurable mappings and product spaces
(see Kallenberg~\cite{Kallenberg2002}).
For background on enlargement of filtrations,
see Jacod~\cite{Jacod} and
Protter~\cite{Protter2004}.

We denote by $\E_W$ the expectation with
respect to the Wiener measure $\mathbb P_W$,
with the parameter $x$ held fixed.

\begin{proposition}[Disintegration of regular BSDEs]
	\label{prop:disintegrate}
	Suppose that the terminal condition and the
	generator admit jointly measurable representations
	$\xi=\xi^{B_T}(W)$ and
	$g(t,\omega,y,z)=g^{B_T}(t,W,y,z)$,
	where, for $\mu$-almost every $x$,
	$\xi^x$ is $\mathcal F_T^W$-measurable and
	$g^x$ is $\F^W$-progressively measurable.
	Assume that $g^x$ is uniformly Lipschitz
	in $(y,z)$, with a constant independent of $x$,
	and that
	\[
	\int_{\mathbb R}\E_W\Big[
	|\xi^x|^2+
	\Big(\int_0^T|g^x(s,0,0)|\,ds\Big)^2
	\Big]\,\mu(dx)<\infty.
	\]
	Then, for $\mu$-almost every $x$, the Brownian
	BSDE with data $(\xi^x,g^x)$ admits a unique
	solution $(Y^x,Z^x)$ in
	$\cS^2(\F^W)\times\cH^2(\F^W)$.
	The solutions can be chosen jointly measurable
	in the parameter and the Brownian variables,
	with $Z^x$ jointly predictable, and
	$$
	(Y^\G,Z^\G)=(Y^{B_T},Z^{B_T})
	$$
	is the unique solution of \eqref{eq:GBSDE}.
\end{proposition}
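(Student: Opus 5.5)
We work on the product space $(\overline\Omega,\overline{\mathcal F},\overline{\mathbb P})$ with $\overline{\mathbb P}=\mu\otimes\mathbb P_W$, and build the $\G$-solution by solving one Brownian BSDE for each fixed value $x$ of $B_T$.

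\emph{Step 1: fibrewise well-posedness.} By Tonelli's theorem, the integrability hypothesis implies that for $\mu$-almost every $x$,
\[
\E_W\Big[|\xi^x|^2+\Big(\int_0^T|g^x(s,0,0)|\,ds\Big)^2\Big]<\infty .
\]
For such $x$, the classical Pardoux--Peng theorem on $(\Omega_W,\mathcal F^W,\mathbb P_W)$ yields a unique solution $(Y^x,Z^x)\in\cS^2(\F^W)\times\cH^2(\F^W)$. On the exceptional $\mu$-null set we put $(Y^x,Z^x)=(0,0)$.

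\emph{Step 2: joint measurability.} This is the step I expect to be the main obstacle. I would not try to select solutions measurably after the fact. Instead I would build them by a parametrized Picard iteration. Start from $(Y^{x,0},Z^{x,0})=(0,0)$ and, given $(Y^{x,n},Z^{x,n})$ jointly measurable with $Z^{x,n}$ jointly predictable, set
\[
Y^{x,n+1}_t=\E_W\Big[\xi^x+\int_t^Tg^x(s,Y^{x,n}_s,Z^{x,n}_s)\,ds\Bigm|\mathcal F_t^W\Big].
\]
The integrand $Z^{x,n+1}$ comes from the Brownian martingale representation.

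Joint measurability in $(x,\omega_W)$ is preserved at each stage for two reasons:
\begin{itemize}[label={}]
\item on the product space, the conditional expectation given $\mathcal F_t^W$ with $x$ frozen equals $\overline{\E}[\,\cdot\mid\sigma(x)\vee\mathcal F^W_t]$, thanks to independence of the coordinates;
\item $Z^{x,n+1}$ can be obtained as the $d\overline{\mathbb P}\otimes dt$-limit of the jointly predictable processes $\frac{d}{dt}\langle Y^{n+1},W\rangle$, or equivalently as the product-space representation integrand, which is jointly predictable.
\end{itemize}
The standard contraction, in the weighted norm $\E_W[\sup_t e^{\beta t}|Y_t|^2+\int_0^Te^{\beta t}|Z_t|^2dt]$, holds with a constant independent of $x$ because the Lipschitz constant is uniform in $x$. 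The iterates therefore converge for every good $x$. Extracting an a.s.\ convergent subsequence, chosen uniformly via the $\overline{\mathbb P}$-integrated norm, the limit is jointly measurable and coincides with $(Y^x,Z^x)$ by fibrewise uniqueness.

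\emph{Step 3: identification in $\G$.} Define $Y^\G_t(\omega)=Y^{B_T(\omega)}_t(W(\omega))$ and $Z^\G$ similarly.
\begin{itemize}[label={}]
\item \textbf{Adaptedness.} $Y^\G$ is $\G$-adapted and $Z^\G$ is $\G$-predictable, since $\mathcal G_t=\sigma(B_T)\vee\mathcal F^W_t$ by Theorem~\ref{thm:PRP}.
\item \textbf{Integrability.} By Fubini and the uniform a priori estimate $\E_W[\sup|Y^x|^2+\int|Z^x|^2]\le C\,\E_W[|\xi^x|^2+(\int|g^x(s,0,0)|ds)^2]$, the pair lies in $\cS^2(\G)\times\cH^2(\G)$.
\item \textbf{Stochastic integral.} The integral $\int_t^TZ^\G_s\,dW_s$ must agree with $\int_t^T Z^x_s\,dW_s$ evaluated at $x=B_T$. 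This holds for elementary jointly predictable integrands and extends by the It\^o isometry on the product space.
\end{itemize}
Hence, for $\mu$-a.e.\ $x$, the fibrewise equation holds $\mathbb P_W$-a.s., so \eqref{eq:GBSDE} holds $\overline{\mathbb P}$-a.s. for each rational $t$. By continuity it holds for all $t$.

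Uniqueness then follows from the preceding proposition on well-posedness of the regular enlarged BSDE. Its hypotheses are satisfied: $g$ is uniformly Lipschitz, and the integrability of $\xi$ and $g(\cdot,0,0)$ is the stated integral condition rewritten via Fubini.
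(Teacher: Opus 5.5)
Your proposal is correct and follows essentially the same route as the paper: fibrewise Pardoux--Peng well-posedness via Tonelli, then a parametrized Picard iteration whose $x$-uniform Lipschitz constant gives convergence in the $\mu$-integrated norm and hence jointly measurable versions (with $Z^x$ jointly predictable) after passing to a subsequence. The rest also matches: the uniform a priori estimate together with the independence of $B_T$ and $W$ gives membership in $\cS^2(\G)\times\cH^2(\G)$, evaluation at $x=B_T$ is commuted with the stochastic integral via elementary integrands and the It\^o isometry, and uniqueness comes from the well-posedness of regular BSDEs in $\G$, with your extra details only making explicit what the paper leaves implicit.
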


\begin{proof}
	By the integrability assumption and Tonelli's
	theorem, the data $(\xi^x,g^x)$ satisfy the
	standard Lipschitz BSDE hypotheses for
	$\mu$-almost every $x$. Hence, the corresponding
	Brownian BSDE admits a unique solution
	$(Y^x,Z^x)$.

To construct a jointly measurable family, we use
Picard iteration on the product space introduced
above. Starting from
$(Y^{x,0},Z^{x,0})=(0,0)$, define, for $n\geq 0$,
\[
Y_t^{x,n+1}
=\xi^x+\int_t^T
g^x(s,Y_s^{x,n},Z_s^{x,n})\,ds
-\int_t^T Z_s^{x,n+1}\,dW_s.
\]
More precisely, setting
\[
\eta^{x,n}
=\xi^x+\int_0^T
g^x(s,Y_s^{x,n},Z_s^{x,n})\,ds,
\]
the Brownian martingale representation theorem
gives
$\E_W[\eta^{x,n}\mid\mathcal F_t^W]
=\E_W[\eta^{x,n}]
+\int_0^t Z_s^{x,n+1}\,dW_s$,
and $Y^{x,n+1}$ is then obtained from
\[
Y_t^{x,n+1}
=\E_W[\eta^{x,n}\mid\mathcal F_t^W]
-\int_0^t
g^x(s,Y_s^{x,n},Z_s^{x,n})\,ds.
\]

The joint measurability of the data ensures
that the conditional expectations can be chosen
jointly measurable in $(x,t,\omega_W)$.
The corresponding martingale integrands can
also be chosen jointly predictable by the
parametrized martingale representation
argument, using approximation by elementary
predictable processes and the It\^o isometry.

Since the Lipschitz constant is independent
of $x$, the standard BSDE estimates yield
convergence of the Picard iterates in the
integrated norm
\[
\int_{\mathbb R}\E_W\left[
\sup_{0\le t\le T}|Y_t^{x,n}-Y_t^x|^2
+\int_0^T|Z_s^{x,n}-Z_s^x|^2\,ds
\right]\mu(dx)\longrightarrow 0.
\]
Passing to suitable subsequences and
measurable versions yields a jointly
measurable family $(Y^x,Z^x)$, with
$Z^x$ jointly predictable.

	By the standard a priori estimates for Lipschitz
	BSDEs, there exists a constant $C>0$,
	independent of $x$, such that
	\[
	\E_W\left[\sup_{t\le T}|Y_t^x|^2
	+\int_0^T|Z_s^x|^2\,ds\right]
	\le C\E_W\left[|\xi^x|^2+
	\left(\int_0^T|g^x(s,0,0)|\,ds\right)^2\right].
	\]
	Integrating with respect to $\mu$ and using
	the integrability assumption, we obtain
	$$
	\int_{\mathbb R}\E_W\int_0^T
	|Z_s^x|^2\,ds\,\mu(dx)<\infty.
	$$
	
	Since $B_T$ and $W$ are independent, their joint
	law is $\mu\otimes\mathbb P_W$. Hence, by
	Tonelli's theorem,
	\[
	\E\int_0^T|Z_s^{B_T}|^2\,ds
	=
	\int_{\mathbb R}\E_W\int_0^T
	|Z_s^x|^2\,ds\,\mu(dx)<\infty,
	\]
	where the finiteness follows from the standard
	a priori estimates and the integrability
	assumption. Thus $Z^{B_T}\in\cH^2(\G)$.
	
	It remains to verify that the parametrized
	solutions yield a solution in $\G$.
	For elementary jointly predictable integrands,
	evaluation at $x=B_T$ commutes with stochastic
	integration. By approximation in the integrated
	$\cH^2$ norm and the It\^o isometry, this
	property extends to $Z^x$. Consequently,
	\[
	\left.\int_0^t Z_s^x\,dW_s\right|_{x=B_T}
	=
	\int_0^t Z_s^{B_T}\,dW_s,
	\qquad 0\le t\le T,
	\]
	up to indistinguishability.
	
	Evaluating the parametrized BSDE at $x=B_T$
	and using the representations of $\xi$ and $g$,
	we obtain
	\[
	Y_t^{B_T}
	=\xi+\int_t^T
	g(s,Y_s^{B_T},Z_s^{B_T})\,ds
	-\int_t^T Z_s^{B_T}\,dW_s.
	\]
	Therefore, $(Y^{B_T},Z^{B_T})$ solves
	\eqref{eq:GBSDE}. Uniqueness follows from
	the well-posedness result for regular BSDEs
	in $\G$.

\end{proof}

\begin{remark}
	Conditionally on $B_T=x$, the original process
	$B$ has the law of a Brownian bridge from
	$0$ to $x$, with conditional mean
	$\E[B_t\mid B_T=x]=tx/T$ and covariance
	$\operatorname{Cov}(B_s,B_t\mid B_T=x)
	=\min(s,t)-st/T$.
	In contrast, since $W$ is independent of
	$B_T$, its conditional law given $B_T=x$
	remains the Wiener measure $\mathbb P_W$
	for $\mu$-almost every $x$.
	Thus, the parametrization in
	Proposition~\ref{prop:disintegrate}
	is performed with respect to the innovation
	$W$, allowing the use of standard Brownian
	BSDE theory for each fixed parameter $x$.
\end{remark}

\section{Reflected BSDEs with continuous obstacles}
\label{sec5}
Let $L$ be a continuous, $\F$-adapted process
satisfying $\E\sup_{t\le T}(L_t^+)^2<\infty$
and $L_T\le\xi$ almost surely.
Assume that $\xi\in L^2(\mathcal F_T)$ and
that $f$ satisfies the standard Lipschitz
and integrability conditions introduced above.
By the classical existence and uniqueness
result for reflected BSDEs
(see El Karoui et al.~\cite{ElKarouiPengQuenez1997}),
there exists a unique triple
$(Y^{\F},Z^{\F},K^{\F})$, where
$(Y^{\F},Z^{\F})\in
\cS^2(\F)\times\cH^2(\F)$ and
$K^{\F}$ is continuous, adapted,
nondecreasing, with $K_0^{\F}=0$ and
$\E[(K_T^{\F})^2]<\infty$, such that
\begin{align}
	Y_t^{\F}
	&=\xi+\int_t^T f(s,Y_s^{\F},Z_s^{\F})\,ds
	+K_T^{\F}-K_t^{\F}
	-\int_t^T Z_s^{\F}\,dB_s,
	\label{eq:FRBSDE}\\
	Y_t^{\F}&\ge L_t,\qquad
	\int_0^T(Y_t^{\F}-L_t)\,dK_t^{\F}=0.
	\nonumber
\end{align}

\begin{theorem}[Transfer of reflection]
	\label{thm:reflecttransfer}
	Suppose that $(Y^{\F},Z^{\F},K^{\F})$
	solves \eqref{eq:FRBSDE} and satisfies
	$\int_0^T|Z_s^{\F}\alpha_s|\,ds<\infty$
	almost surely. Then the same triple,
	viewed in $\G$, satisfies the reflected
	BSDE driven by $W$, with terminal condition
	$\xi$, obstacle $L$, and generator
	$\widetilde f(t,y,z)=f(t,y,z)-z\alpha_t$.
	More precisely,
	\begin{align}
		Y_t^{\F}
		&=\xi+\int_t^T
		\widetilde f(s,Y_s^{\F},Z_s^{\F})\,ds
		+K_T^{\F}-K_t^{\F}
		-\int_t^T Z_s^{\F}\,dW_s,
		\label{eq:GRBSDE}\\
		Y_t^{\F}&\ge L_t,\qquad
		\int_0^T(Y_t^{\F}-L_t)\,dK_t^{\F}=0.
		\nonumber
	\end{align}
	The transformed drift integral is understood
	in the almost surely absolutely convergent
	sense.
\end{theorem}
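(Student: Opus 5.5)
The plan mirrors the proof of Proposition~\ref{prop:transfer}: the reflection term and the constraints do not involve the driving martingale, so only the stochastic integral needs rewriting. First we check measurability in $\G$. Since $\F\subseteq\G$, the processes $Y^{\F}$, $K^{\F}$ and $L$ are $\G$-adapted and continuous, and $Z^{\F}$ is $\G$-predictable. The norms $\E\sup_t|Y_t^{\F}|^2$, $\E\int_0^T|Z_s^{\F}|^2\,ds$ and $\E[(K_T^{\F})^2]$ do not depend on the filtration. Hence $(Y^{\F},Z^{\F})\in\cS^2(\G)\times\cH^2(\G)$, and $K^{\F}$ is a continuous, nondecreasing, $\G$-adapted process with $K_0^{\F}=0$ and square-integrable terminal value.

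Second, we use the $\G$-semimartingale decomposition $B_t=W_t+\int_0^t\alpha_s\,ds$ from Theorem~\ref{thm:PRP}. For $t<T$ it gives
\[
\int_0^t Z_s^{\F}\,dB_s=\int_0^t Z_s^{\F}\,dW_s+\int_0^t Z_s^{\F}\alpha_s\,ds .
\]
The main point needing care is that the $\F$-stochastic integral on the left coincides with the $\G$-stochastic integral of the same integrand against the $\G$-semimartingale $B$. We expect to handle this as follows. For elementary $\F$-predictable integrands, both integrals are the same Riemann sums. In general, approximate $Z^{\F}$ in $\cH^2(\F)$ by elementary processes. On the $\F$ side the integrals converge in $\cS^2$. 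On the $\G$ side, localize on $[0,t]$ with $t<T$: there $\int_0^t|\alpha_s|^2\,ds$ has finite expectation $\log\frac{T}{T-t}$, so the drift integrals converge in probability by Cauchy--Schwarz, while the $W$-integrals converge by the It\^o isometry. This is exactly the step used in Proposition~\ref{prop:transfer}, so we may also cite it directly.

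Third, we pass to $t\uparrow T$. The $W$-integral extends continuously to $T$ because $Z^{\F}\in\cH^2(\G)$. The drift integral converges by the hypothesis $\int_0^T|Z_s^{\F}\alpha_s|\,ds<\infty$ almost surely, and the left side converges by continuity of the $\F$-integral. Thus the decomposition holds on $[0,T]$, up to indistinguishability. Substituting $\int_t^T Z^{\F}\,dB=\int_t^T Z^{\F}\,dW+\int_t^T Z^{\F}\alpha\,ds$ into \eqref{eq:FRBSDE} gives \eqref{eq:GRBSDE} with generator $\widetilde f$.

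Finally, the obstacle constraint $Y_t^{\F}\ge L_t$ and the Skorokhod condition $\int_0^T(Y_t^{\F}-L_t)\,dK_t^{\F}=0$ are pathwise statements about the same processes. They therefore carry over verbatim. As in Proposition~\ref{prop:transfer}, we make no uniqueness claim, because $\widetilde f$ is not uniformly Lipschitz in $z$.
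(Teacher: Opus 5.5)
Your proposal is correct and follows the same route as the paper. Like the paper, you rewrite $\int Z^{\F}\,dB$ via the $\G$-decomposition $B=W+\int_0^\cdot\alpha_s\,ds$ from Theorem~\ref{thm:PRP}, pass to $t\uparrow T$ using $Z^{\F}\in\cH^2(\G)$ and the absolute-integrability hypothesis, substitute into \eqref{eq:FRBSDE}, and note that the obstacle and Skorokhod conditions are pathwise. Your elementary-approximation argument on $[0,t]$, using $\E\int_0^t\alpha_s^2\,ds=\log\frac{T}{T-t}<\infty$, simply makes explicit the identification of the $\F$-It\^o integral with its $\G$-decomposition, which the paper takes for granted by appealing to the decomposition of stochastic integrals with respect to semimartingales.
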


\begin{proof}
	By Theorem~\ref{thm:PRP}, we have
	$B_t=W_t+\int_0^t\alpha_s\,ds$.
	Since $\F\subseteq\G$, the processes
	$Y^{\F}$ and $K^{\F}$ remain $\G$-adapted,
	while $Z^{\F}$ is $\G$-predictable and
	belongs to $\cH^2(\G)$.
	
	As in Proposition~\ref{prop:transfer},
	the semimartingale decomposition gives
	\[
	\int_t^T Z_s^{\F}\,dB_s
	=\int_t^T Z_s^{\F}\,dW_s
	+\int_t^T Z_s^{\F}\alpha_s\,ds.
	\]
	The stochastic integral with respect to
	$W$ is well defined, and the finite-variation
	integral converges absolutely by assumption.
	Substituting into \eqref{eq:FRBSDE} yields
	\eqref{eq:GRBSDE}.
	
	Moreover, the obstacle constraint
	$Y^{\F}\ge L$ is preserved since it holds
	pathwise. The process $K^{\F}$ remains
	continuous and nondecreasing, and the
	Skorokhod condition
	$\int_0^T(Y_t^{\F}-L_t)\,dK_t^{\F}=0$
	is a pathwise Stieltjes identity.
	Hence it is unchanged by the enlargement
	of filtration, completing the proof.
\end{proof}

\begin{example}[Nonuniqueness of the transferred reflected BSDE]
	\label{ex:reflectednonuniq}
	Let $\xi=B_T$, $f=0$, and consider the continuous
	$\F$-adapted obstacle $L_t=B_t$. The transferred
	reflected BSDE in $\G$ is
	\[
	Y_t=B_T-\int_t^T Z_s\alpha_s\,ds
	+K_T-K_t-\int_t^T Z_s\,dW_s,
	\]
	subject to $Y_t\ge B_t$ and
	$\int_0^T(Y_t-B_t)\,dK_t=0$.
	
	One solution is $(Y^1,Z^1,K^1)=(B,1,0)$.
	A second solution is given by
	\[
	Y_t^2=B_t+(B_t-B_T)^2+T-t,\qquad
	Z_t^2=1+2(B_t-B_T),\qquad K_t^2=0.
	\]
	Indeed, It\^o's formula yields
	$dY_t^2=Z_t^2\,dB_t
	=Z_t^2\,dW_t+Z_t^2\alpha_t\,dt$.
	Since $Y_T^2=B_T$, the second triple satisfies
	the transferred equation. Moreover,
	$Y_t^2-L_t=(B_t-B_T)^2+T-t\ge0$, and
	the Skorokhod condition holds since $K^2=0$.
	
	Both triples belong to the usual
	square-integrable solution spaces.
	The drift integrals are absolutely convergent
	by Lemma~\ref{lem:drift}, since both integrands
	$Z^1$ and $Z^2$ have bounded trajectories
	on $[0,T]$. Finally,
	$Y_t^2-Y_t^1=(B_t-B_T)^2+T-t>0$ for every
	$t<T$. Thus the transferred reflected BSDE
	admits at least two distinct solutions.
\end{example}

\begin{example}[Nonuniqueness with active reflection]
	Let $T=1$, $\xi=B_1$, $f=0$, and consider
	the continuous $\F$-adapted obstacle
	$L_t=B_t+c(1-t)$, where $0<c<1$.
	The transferred reflected BSDE is
	\[
	Y_t=B_1-\int_t^1Z_s\alpha_s\,ds
	+K_1-K_t-\int_t^1Z_s\,dW_s.
	\]
	
	A first solution is
	\[
	Y_t^1=B_t+c(1-t),\qquad
	Z_t^1=1,\qquad K_t^1=ct.
	\]
	Indeed, $dY_t^1=dB_t-c\,dt
	=Z_t^1\,dB_t-dK_t^1$, and
	$Y^1=L$, so the Skorokhod minimality
	condition holds.
	
	A second solution is given by
	\[
	Y_t^2=B_t+(B_t-B_1)^2+1-t,\qquad
	Z_t^2=1+2(B_t-B_1),\qquad K_t^2=0.
	\]
	By It\^o's formula,
	$dY_t^2=Z_t^2\,dB_t$ and $Y_1^2=B_1$.
	Moreover,
	\[
	Y_t^2-L_t
	=(B_t-B_1)^2+(1-c)(1-t)\ge0.
	\]
	Thus the obstacle and minimality
	conditions are satisfied.
	
	Using $dB_t=dW_t+\alpha_t\,dt$,
	both triples satisfy the transferred
	reflected equation. They belong to the
	usual square-integrable solution spaces,
	and their drift integrals are absolutely
	convergent, since the corresponding
	integrands have bounded trajectories
	and $\int_0^1|\alpha_s|\,ds<\infty$
	almost surely.
	
	Since $K_t^1=ct$ whereas $K_t^2=0$,
	the two solutions are distinct.
	Hence nonuniqueness may occur even
	when the reflection mechanism is
	active for one of the solutions.
\end{example}

\begin{remark}
	Theorem~\ref{thm:reflecttransfer} establishes
	a transfer identity, not a general
	existence-and-uniqueness result for the
	transformed reflected BSDE.
	Indeed, the generator
	$\widetilde f(t,y,z)=f(t,y,z)-z\alpha_t$
	does not satisfy the standard uniform
	Lipschitz condition. Thus uniqueness
	cannot be inferred from the classical
	reflected BSDE theory.
\end{remark}

\section{Snell envelopes and the value of terminal information}
\label{sec6}
Let $X$ be a continuous, $\F$-adapted process
satisfying $\E\sup_{t\le T}|X_t|^2<\infty$.
For $\mathbb H\in\{\F,\G\}$ and $t\in[0,T]$,
let $\cT_{t,T}^{\mathbb H}$ denote the set of
all $\mathbb H$-stopping times taking values
in $[t,T]$. Define the Snell envelope
\[
S_t^{\mathbb H}
=\operatorname*{ess\,sup}_{\tau\in
	\cT_{t,T}^{\mathbb H}}
\E[X_\tau\mid\mathcal H_t].
\]
Under the present assumptions, these Snell
envelopes admit continuous versions.
In the zero-driver reflected setting,
$S^{\mathbb H}$ is the first component
of the corresponding reflected BSDE with
terminal condition $X_T$ and obstacle $X$
(see El Karoui et al.~\cite{ElKarouiPengQuenez1997}).

\begin{theorem}[Conditional value of information]
	\label{thm:value}
	For every deterministic $t\in[0,T]$,
	\begin{equation}\label{eq:value}
		\E[S_t^{\G}\mid\mathcal F_t]
		\ge S_t^{\F}
		\qquad \Pp\text{-a.s.}
	\end{equation}
	In particular, $\E [S_0^{\G}]\ge\E [S_0^{\F}]$.
\end{theorem}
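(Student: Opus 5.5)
The plan is to compare the two essential suprema stopping time by stopping time, and then use the tower property. Fix a deterministic $t\in[0,T]$ and an $\F$-stopping time $\tau\in\cT_{t,T}^{\F}$. Since $\mathcal F_s\subseteq\mathcal G_s$ for every $s$, the event $\{\tau\le s\}$ lies in $\mathcal F_s\subseteq\mathcal G_s$. Hence $\tau$ is also a $\G$-stopping time, and $\cT_{t,T}^{\F}\subseteq\cT_{t,T}^{\G}$. By the definition of the essential supremum defining $S_t^{\G}$,
\[
S_t^{\G}\ge \E[X_\tau\mid\mathcal G_t]\qquad\Pp\text{-a.s.}
\]
Integrability is not an issue. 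Indeed, $|X_\tau|\le\sup_{s\le T}|X_s|\in L^2$, so all conditional expectations are well defined. The Snell envelope satisfies
\[
|S_t^{\G}|\le\E\bigl[\sup_{s\le T}|X_s|\bigm|\mathcal G_t\bigr],
\]
so $S_t^{\G}$ is integrable and $\E[S_t^{\G}\mid\mathcal F_t]$ makes sense.

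Next I condition the inequality on $\mathcal F_t$. Conditional expectation is monotone, and the tower property applies because $\mathcal F_t\subseteq\mathcal G_t$. This gives
\[
\E[S_t^{\G}\mid\mathcal F_t]\ge\E\bigl[\E[X_\tau\mid\mathcal G_t]\bigm|\mathcal F_t\bigr]=\E[X_\tau\mid\mathcal F_t]\qquad\Pp\text{-a.s.}
\]
The left-hand side does not depend on $\tau$. It is therefore an $\mathcal F_t$-measurable random variable that dominates a.s.\ every element of the family $\{\E[X_\tau\mid\mathcal F_t]:\tau\in\cT_{t,T}^{\F}\}$.

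The only point needing care is passing from ``for each $\tau$'' to the essential supremum, since there are uncountably many stopping times and the null sets depend on $\tau$. The characterization of the essential supremum resolves this. $S_t^{\F}$ is the smallest $\mathcal F_t$-measurable (extended) random variable dominating every element of the family a.s. Since $\E[S_t^{\G}\mid\mathcal F_t]$ is such a dominating $\mathcal F_t$-measurable variable, \eqref{eq:value} follows. Alternatively, one may use the standard fact that the family is upward directed. Then $S_t^{\F}=\lim_n\E[X_{\tau_n}\mid\mathcal F_t]$ for a sequence $(\tau_n)$, and only countably many null sets are involved.

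Finally, take $t=0$ and expectations on both sides of \eqref{eq:value}. Since $\E\bigl[\E[S_0^{\G}\mid\mathcal F_0]\bigr]=\E[S_0^{\G}]$, we obtain $\E[S_0^{\G}]\ge\E[S_0^{\F}]$. Here $\mathcal F_0$ is trivial up to completion, so $S_0^{\F}$ is deterministic, whereas $S_0^{\G}$ is $\sigma(B_T)$-measurable. No continuity of the Snell envelopes is needed, because $t$ is deterministic.
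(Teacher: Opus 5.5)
Your proposal is correct and follows the paper's proof: you embed $\cT_{t,T}^{\F}$ into $\cT_{t,T}^{\G}$, condition the inequality $S_t^{\G}\ge\E[X_\tau\mid\mathcal G_t]$ on $\mathcal F_t$ using the tower property, and then take the essential supremum over $\tau$. Beyond the paper's argument, you also check integrability and justify the passage to the essential supremum carefully; the paper leaves both implicit.
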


\begin{proof}
	Since $\F\subseteq\G$, every $\F$-stopping
	time is also a $\G$-stopping time. Thus,
	for any $\tau\in\cT_{t,T}^{\F}$,
	$$
	S_t^{\G}
	\ge\E[X_\tau\mid\mathcal G_t].
	$$
	Taking conditional expectations with respect
	to $\mathcal F_t$ and using the tower property,
	we obtain
	$\E[S_t^{\G}\mid\mathcal F_t]
	\ge\E[X_\tau\mid\mathcal F_t]$.
	Taking the essential supremum over
	$\tau\in\cT_{t,T}^{\F}$ yields
	\eqref{eq:value}.
\end{proof}

\begin{proposition}[Equality under a common optimal stopping time]
	\label{prop:equality}
	Fix a deterministic $t\in[0,T]$.
	Suppose that some
	$\tau_t^*\in\cT_{t,T}^{\F}$
	is also optimal for the $\G$-problem,
	that is,
	$S_t^{\G}=\E[X_{\tau_t^*}\mid\mathcal G_t]$.
	Then equality holds in \eqref{eq:value}.
\end{proposition}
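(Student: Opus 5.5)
The argument combines the tower property with the defining property of the $\F$-Snell envelope. Theorem~\ref{thm:value} already gives $\E[S_t^{\G}\mid\mathcal F_t]\ge S_t^{\F}$, so only the reverse inequality is needed.

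First we use the hypothesis $S_t^{\G}=\E[X_{\tau_t^*}\mid\mathcal G_t]$. Conditioning on $\mathcal F_t$ and using $\mathcal F_t\subseteq\mathcal G_t$, the tower property gives
\[
\E[S_t^{\G}\mid\mathcal F_t]=\E\big[\E[X_{\tau_t^*}\mid\mathcal G_t]\bigm|\mathcal F_t\big]=\E[X_{\tau_t^*}\mid\mathcal F_t].
\]
This step is legitimate because $X_{\tau_t^*}$ is integrable: $|X_{\tau_t^*}|\le\sup_{s\le T}|X_s|\in L^2$ by the standing assumption on $X$.

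Second, since $\tau_t^*\in\cT_{t,T}^{\F}$, it is admissible in the essential supremum that defines $S_t^{\F}$. Hence $\E[X_{\tau_t^*}\mid\mathcal F_t]\le S_t^{\F}$ almost surely. Combining the two steps gives $\E[S_t^{\G}\mid\mathcal F_t]\le S_t^{\F}$, and together with \eqref{eq:value} this yields equality almost surely. As a by-product, $\tau_t^*$ is then also optimal for the $\F$-problem.

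There is no substantive obstacle. The only point needing care is integrability, so that the conditional expectations and the tower property make sense. This follows from $\E\sup_{t\le T}|X_t|^2<\infty$, which also ensures that $S_t^{\G}$ is integrable: it is dominated by $\E[\sup_s|X_s|\mid\mathcal G_t]$.
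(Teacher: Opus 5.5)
Your proposal is correct and follows the paper's proof: the tower property turns $\E[S_t^{\G}\mid\mathcal F_t]$ into $\E[X_{\tau_t^*}\mid\mathcal F_t]\le S_t^{\F}$, and Theorem~\ref{thm:value} gives the reverse inequality. Your integrability remarks and the observation that $\tau_t^*$ is then also $\F$-optimal are correct additions that the paper leaves implicit.
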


\begin{proof}
	By the tower property,
	$$
	\E[S_t^{\G}\mid\mathcal F_t]
	=\E[X_{\tau_t^*}\mid\mathcal F_t]
	\le S_t^{\F}.
	$$
	The reverse inequality follows from \eqref{eq:value} in 
	Theorem~\ref{thm:value}, proving equality.
\end{proof}

\begin{example}[Strict value of terminal information]
	\label{ex:strict}
	Let $X_t=B_t$ on $[0,T]$.
	Since $B$ is an $\F$-martingale, the
	optional stopping theorem gives
	$S_0^{\F}=0$.
	
	In $\G$, consider the stopping time
	$\tau=T\mathbf 1_{\{B_T>0\}}$,
	which is admissible since
	$B_T$ is $\mathcal G_0$-measurable.
	The corresponding reward is $(B_T)^+$,
	so $S_0^{\G}\ge(B_T)^+$.
	Consequently,
	\[
	\E [S_0^{\G}]
	\ge \E[(B_T)^+]
	=\sqrt{\frac{T}{2\pi}}
	>0=\E [S_0^{\F}].
	\]
	Thus the additional terminal information
	strictly increases the expected optimal
	stopping value.
\end{example}

\begin{remark}[Failure of pathwise domination]
	\label{rem:pathwise}
	The conditional comparison
	\eqref{eq:value} does not imply
	$S_t^{\G}\ge S_t^{\F}$ almost surely.
	
	To see this, let $T=1$ and $X_t=tB_t$.
	
	In $\F$, consider the stopping time
	$\tau=1/2$ on $\{B_{1/2}<0\}$ and
	$\tau=1$ otherwise. Since
	$\E[B_1\mid\mathcal F_{1/2}]=B_{1/2}$,
	the tower property gives
	\[
	\begin{aligned}
		\E[\tau B_\tau]
		=\frac12\E[B_{1/2}
		\mathbf1_{\{B_{1/2}<0\}}]
		+\E[B_1\mathbf1_{\{B_{1/2}\ge0\}}]
		&=\frac12\E[B_{1/2}
		\mathbf1_{\{B_{1/2}<0\}}]
		+\E[B_{1/2}
		\mathbf1_{\{B_{1/2}\ge0\}}]\\
		&=\frac12\E[B_{1/2}^+]
		=\frac{1}{4\sqrt{\pi}}>0.
	\end{aligned}
	\]
	Consequently,
	$S_0^{\F}\ge\E[\tau B_\tau]>0$.

Conditionally on $B_1=x$, we may write
$B_t=tx+\beta_t$, where
$\beta_t=B_t-tB_1$ is a centered Brownian
bridge independent of $B_1$.
The corresponding conditional optimal
stopping value is
$$
v(x)=
\sup_{\tau\in\cT_{0,1}^{\beta}}
\E_\beta\left[\tau^2x+\tau\beta_\tau\right],
$$
where $\cT_{0,1}^{\beta}$ denotes the
stopping times of the natural filtration
of $\beta$ taking values in $[0,1]$.
In particular, $S_0^{\G}=v(B_1)$
almost surely.

For $x<0$, we have
\[
0\le v(x)\le
\E_\beta\left[
\sup_{0\le t\le1}
(t^2x+t\beta_t)^+
\right],
\]
where $\E_\beta$ denotes expectation
with respect to the law of $\beta$.

To study the limit as $x\to-\infty$,
fix a continuous trajectory of $\beta$.
For every $\varepsilon>0$, continuity
at $0$ and $\beta_0=0$ yield
$\delta>0$ such that
$|\beta_t|<\varepsilon$ for $t\le\delta$.
Since $x<0$, we have
$(t^2x+t\beta_t)^+\le\varepsilon$
on $[0,\delta]$.
On $[\delta,1]$,
$$
t^2x+t\beta_t
\le\delta^2x+\sup_{0\le s\le1}|\beta_s|,
$$
which is negative for sufficiently
large $|x|$. Therefore,
$\sup_{0\le t\le1}(t^2x+t\beta_t)^+
\to0$ almost surely.

Moreover, for $x<0$, this supremum
is bounded by
$\sup_{0\le t\le1}|\beta_t|$,
which is integrable.
By dominated convergence,
$v(x)\to0$ as $x\to-\infty$.

	Since $S_0^{\G}=v(B_1)$ almost surely,
	there exists $M>0$ such that
	$S_0^{\G}<S_0^{\F}$ on
	$\{B_1<-M\}$, an event of positive
	probability. Thus pathwise domination
	may fail, even though the conditional
	inequality \eqref{eq:value} always holds.
\end{remark}

\begin{remark}[Value of information with restricted stopping times]
	Consider $T=1$ and the continuous,
	$\F$-adapted reward $X_t=(2t-1)B_t$.
	Suppose that stopping is restricted to
	the two dates $\{0,1\}$.
	
	In the filtration $\F$, since $X_0=0$
	and $\E[X_1]=\E[B_1]=0$, the optimal
	value at time zero is
	$$
	V_0^{\F}=\max\{0,\E[B_1]\}=0.
	$$
	In contrast, $B_1$ is already known at
	time zero in $\G$. The decision to stop
	at $1$ can therefore be made according
	to the sign of $B_1$, giving
	$$
	V_0^{\G}=\max\{0,B_1\}=(B_1)^+.
	$$
	Consequently,
	\[
	\E [V_0^{\G}]
	=\E[(B_1)^+]
	=\frac{1}{\sqrt{2\pi}}
	>0=\E[ V_0^{\F}].
	\]
	This illustrates the strict advantage
	of terminal information when stopping
	is restricted to $\{0,1\}$. It does not,
	by itself, establish strict inequality
	for the unrestricted Snell envelopes.
\end{remark}

\section{An exact conditional representation for endpoint payoffs}
\label{sec7}
The preceding sections have shown that initial
enlargement may substantially affect the
solutions of BSDEs, particularly through the
additional information available at time zero.
We now consider the special case of a zero
driver and a terminal payoff depending only
on $B_T$. In this setting, the terminal
condition is already known in $\G$, whereas
its value in $\F$ is determined by conditional
expectation. This leads to an exact projection
identity between the two solutions and, under
additional regularity, to an explicit
representation through the heat semigroup.

\begin{proposition}[Endpoint payoffs]
	\label{prop:endpoint}
	Let $h:\mathbb R\to\mathbb R$ be Borel
	measurable with $h(B_T)\in L^2$.
	For the zero-driver BSDEs, the solutions
	in $\F$ and $\G$ satisfy
	\[
	Y_t^{\F}=\E[h(B_T)\mid\mathcal F_t],
	\qquad
	Y_t^{\G}=h(B_T),\qquad Z_t^{\G}=0.
	\]
	Consequently,
	\[
	\E[Y_t^{\G}\mid\mathcal F_t]=Y_t^{\F},
	\qquad 0\le t\le T.
	\]
	If $h\in C_b^1(\mathbb R)$,
	$h\in C_b^1(\mathbb R)$\footnote{
		$C_b^1(\mathbb R)$ denotes the space of
		continuously differentiable functions
		$h:\mathbb R\to\mathbb R$ such that both
		$h$ and its derivative $h'$ are bounded.
	}
	 then
	\[
	Y_t^{\F}=P_{T-t}h(B_t),
	\qquad
	Z_t^{\F}=\partial_xP_{T-t}h(B_t),
	\]
	where $P_sh(x)=\E[h(x+\sqrt{s}N)]$
	is the heat semigroup, with
	$N\sim\mathcal N(0,1)$.
\end{proposition}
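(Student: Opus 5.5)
The plan has three stages: solve the two zero-driver BSDEs explicitly, deduce the projection identity from the tower property, and then identify $Y^{\F}$ and $Z^{\F}$ through the heat semigroup.

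\textbf{Stage 1: the two solutions.} In $\F$, with $f=0$, the unique solution in $\cS^2(\F)\times\cH^2(\F)$ satisfies $Y_t^{\F}=\E[Y_T^{\F}-\int_t^TZ^{\F}_s\,dB_s\mid\mathcal F_t]$. The stochastic integral is a true martingale because $Z^{\F}\in\cH^2(\F)$. This gives $Y_t^{\F}=\E[h(B_T)\mid\mathcal F_t]$, with $Z^{\F}$ the integrand from Brownian martingale representation.

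In $\G$, $h(B_T)$ is $\mathcal G_0$-measurable. The pair $(h(B_T),0)$ is continuous and adapted, lies in $\cS^2(\G)\times\cH^2(\G)$ because $h(B_T)\in L^2$, and satisfies \eqref{eq:GBSDE} with $g=0$ trivially. Uniqueness follows from the well-posedness proposition for regular enlarged BSDEs, or directly from \eqref{eq:PRP}, since $\E[h(B_T)\mid B_T]=h(B_T)$ forces $H=0$.

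\textbf{Stage 2: the projection identity.} Since $h(B_T)$ is $\mathcal G_t$-measurable and $\mathcal F_t\subseteq\mathcal G_t$, we get $\E[Y_t^{\G}\mid\mathcal F_t]=\E[h(B_T)\mid\mathcal F_t]=Y_t^{\F}$ for every $t$. This is immediate.

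\textbf{Stage 3: the heat-semigroup formula, for $h\in C_b^1$.}
\begin{itemize}
\item By the Markov property and independence of increments, $\E[h(B_T)\mid\mathcal F_t]=\E[h(x+B_T-B_t)]|_{x=B_t}=P_{T-t}h(B_t)$.
\item To identify $Z^{\F}$, set $u(t,x)=P_{T-t}h(x)$. For $t<T$, Gaussian smoothing makes $u$ belong to $C^{1,2}$ on $[0,T)\times\mathbb R$, and $u$ solves $\partial_tu+\tfrac12\partial_{xx}u=0$.
\item Itô's formula on $[0,t]$ with $t<T$ then gives $u(t,B_t)=u(0,0)+\int_0^t\partial_xu(s,B_s)\,dB_s$.
\item Since $h\in C_b^1$, we have $\partial_xu(s,x)=\E[h'(x+\sqrt{T-s}N)]$. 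Differentiation under the expectation is justified by dominated convergence with bound $\|h'\|_\infty$. Hence $|\partial_xu|\le\|h'\|_\infty$, so the integrand lies in $\cH^2(\F)$.
\item Let $t\uparrow T$. Continuity of $h$ and boundedness give $u(t,B_t)\to h(B_T)$ almost surely and in $L^2$, by dominated convergence again. The stochastic integrals converge in $L^2$ by the Itô isometry.
\item The result is $h(B_T)=P_Th(0)+\int_0^T\partial_xP_{T-s}h(B_s)\,dB_s$. Uniqueness in Brownian martingale representation then yields $Z^{\F}_t=\partial_xP_{T-t}h(B_t)$, $dt\otimes d\Pp$-a.e.
\end{itemize}

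The main obstacle is the behaviour at $t=T$. The function $u$ is smooth only on $[0,T)$, so Itô's formula must be applied on $[0,t]$ and the limit taken afterwards. The $C_b^1$ hypothesis is used exactly to get a uniform bound on $\partial_xu$ up to maturity, which controls this passage to the limit.
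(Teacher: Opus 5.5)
Your proposal is correct and follows essentially the same route as the paper. In $\F$ you identify $Y^{\F}$ as the conditional expectation; the paper builds the solution from the martingale representation and invokes uniqueness, whereas you condition the equation, which is equivalent. In $\G$ you take the constant pair $(h(B_T),0)$ with uniqueness from well-posedness, apply the tower property, and identify $Z^{\F}$ via It\^o's formula for $u(t,x)=P_{T-t}h(x)$ together with uniqueness of the Brownian martingale representation. Your explicit passage to the limit $t\uparrow T$, using $|\partial_x u|\le\|h'\|_\infty$ and the It\^o isometry, makes rigorous a step the paper leaves implicit.
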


\begin{proof}
	We first consider the BSDE in the original
	filtration $\F$:
	\[
	Y_t^{\F}
	=h(B_T)-\int_t^T Z_s^{\F}\,dB_s.
	\]
	Since $h(B_T)\in L^2(\mathcal F_T)$,
	the process
	$M_t=\E[h(B_T)\mid\mathcal F_t]$
	is a square-integrable $\F$-martingale.
	By the martingale representation theorem
	for Brownian filtrations (see, e.g.,
	Karatzas and Shreve~\cite{KaratzasShreve1991}),
	there exists a unique
	$Z^{\F}\in\cH^2(\F)$ such that
	$$
	M_t=\E[h(B_T)]
	+\int_0^t Z_s^{\F}\,dB_s.
	$$
	Evaluating this identity at $T$ and
	subtracting the expression at time $t$
	gives
	$M_t=h(B_T)-\int_t^T Z_s^{\F}\,dB_s$.
	Thus $(M,Z^{\F})$ solves the zero-driver
	BSDE, and uniqueness yields
	$Y_t^{\F}=\E[h(B_T)\mid\mathcal F_t]$.
	
	We now consider the enlarged filtration
	$\G$. Since $B_T$ is $\mathcal G_0$-measurable,
	so is $h(B_T)$. Hence the constant process
	$Y_t^{\G}=h(B_T)$, together with
	$Z_t^{\G}=0$, satisfies
	$$
	Y_t^{\G}
	=h(B_T)-\int_t^T Z_s^{\G}\,dW_s.
	$$
	This solution is square-integrable.
	Its uniqueness follows from the
	well-posedness of the zero-driver BSDE
	in $\G$, since $W$ is a $\G$-Brownian
	motion with the martingale representation
	property (Theorem \ref{thm:PRP}). Consequently, the tower property
	gives
	$\E[Y_t^{\G}\mid\mathcal F_t]
	=\E[h(B_T)\mid\mathcal F_t]=Y_t^{\F}$.
	
	Finally, suppose that
	$h\in C_b^1(\mathbb R)$.
	Since $B_T=B_t+(B_T-B_t)$ and the
	increment $B_T-B_t$ is independent
	of $\mathcal F_t$ with distribution
	$\mathcal N(0,T-t)$, we obtain
	\[
	Y_t^{\F}
	=\E[h(B_t+\sqrt{T-t}\,N)\mid\mathcal F_t]
	=P_{T-t}h(B_t),
	\]
	where $N\sim\mathcal N(0,1)$ is independent
	of $\mathcal F_t$ and
	$P_sh(x)=\E[h(x+\sqrt{s}\,N)]$.

	Set $u(t,x)=P_{T-t}h(x)$.
	By the classical heat-semigroup representation
	(see Karatzas and Shreve
	\cite[Sections 4.3 \& 5.7.B]{KaratzasShreve1991}),
	$u\in C^{1,2}([0,T)\times\mathbb R)$
	and satisfies
	\[
	\partial_tu(t,x)
	+\frac12\partial_{xx}u(t,x)=0,
	\qquad t<T,
	\]
	with terminal condition $u(T,x)=h(x)$.

	Applying It\^o's formula to $u(t,B_t)$,
	and using $d\langle B\rangle_t=dt$, we obtain
	\[
	\begin{aligned}
		dY_t^{\F}
		=du(t,B_t)
		&=\left(\partial_tu(t,B_t)
		+\frac12\partial_{xx}u(t,B_t)\right)dt
		+\partial_xu(t,B_t)\,dB_t\\
		&=\partial_xu(t,B_t)\,dB_t,
	\end{aligned}
	\]
	where the last equality follows from
	the backward heat equation
	$\partial_tu+\frac12\partial_{xx}u=0$.
	
	Moreover, since $h'$ is bounded,
	differentiation under the expectation
	gives
	$\partial_xP_{T-t}h=P_{T-t}h'$.
	The resulting integrand is bounded
	and hence belongs to $\cH^2(\F)$.
	By uniqueness of the Brownian martingale
	representation,
	$$
	Z_t^{\F}
	=\partial_xP_{T-t}h(B_t)
	=P_{T-t}h'(B_t),
	$$
	for $dt\otimes d\Pp$-almost every
	$(t,\omega)$, completing the proof.
\end{proof}

\begin{remark}
	The projection identity relies on the
	zero-driver structure and the fact that
	the terminal payoff is already known
	at time zero in $\G$.
	For nonlinear generators, conditional
	projection generally does not commute
	with the generator, so the projection
	of a $\G$-BSDE need not satisfy the
	corresponding $\F$-BSDE.
\end{remark}

\begin{remark}[Projection and filtration shrinkage]
	The preceding projection identity is specific
	to zero-driver endpoint payoffs. For more
	general BSDEs, projecting a solution from
	$\G$ onto $\F$ may modify both its drift
	and martingale components.
	
	Indeed, under suitable integrability
	conditions, the optional projection of
	$A_t=\int_0^t\vartheta_s\,ds$ satisfies
	\[
	\E[A_t\mid\mathcal F_t]
	=
	\int_0^t\E[\vartheta_s\mid\mathcal F_s]\,ds
	+M_t,
	\]
	where $M$ is an $\F$-martingale.
	Thus, even the projection of a
	finite-variation process may produce
	an additional martingale term.
	
	Related calculations and applications
	to BSDEs are discussed in an unpublished
	note by Jeanblanc~\cite{JeanblancPC}.
	For nonlinear generators, conditional
	projection generally does not commute
	with the generator, so the projected
	process need not satisfy the original
	BSDE in $\F$.
\end{remark}

\section{Financial applications}
\label{sec:financial}

We illustrate the implications of initial
enlargement for financial claims through
three examples. The first concerns the value
of terminal information in an American option
exercise problem. The second distinguishes
the transfer of a reflected BSDE from
optimal stopping under enlarged information.
The third examines the martingale
representation of a European payoff.

Throughout this section, let $T>0$ and
consider a discounted risky asset satisfying
\[
dS_t=\sigma S_t\,dB_t,
\qquad S_0>0,\quad \sigma>0,
\]
where
\[
S_t=S_0\exp\left(
\sigma B_t-\frac12\sigma^2t
\right).
\]
Thus $S$ is a positive square-integrable
$\F$-martingale. We work under a fixed
probability measure $\Pp$ with zero
risk-free interest rate.

The ordinary investor observes $\F$,
whereas the informed investor observes
$\G$, where
\[
\mathcal G_t=\mathcal F_t\vee\sigma(B_T).
\]
In particular, the informed investor
knows $S_T$ at time zero.

Initial enlargement and its applications
to asymmetric financial information
have been studied in
\cite{AmendingerImkellerSchweizer1998}.
For American claims and reflected BSDEs
under asymmetric information, we refer
in particular to
\cite{EsmaeeliImkeller2018}.

For a comprehensive treatment of the
stochastic valuation of European and
American options, including martingale
pricing methods, hedging strategies,
and optimal exercise problems, we refer
to Shiryaev et al.~\cite{shiryaev1995toward}.

\subsection{Optimal exercise under
	terminal information}

Consider an American call option with
strike $K>0$ and exercise reward
\[
X_t=(S_t-K)^+,\qquad 0\leq t\leq T.
\]
For $\mathbb H\in\{\F,\G\}$, let
\[
V_t^{\mathbb H}
=
\operatorname*{ess\,sup}_{
	\tau\in\mathcal T_{t,T}^{\mathbb H}}
\E[X_\tau\mid\mathcal H_t]
\]
denote the corresponding optimal
exercise value.

\begin{proposition}
	[Strict value of terminal information]
	\label{prop:financial_call}
	
	Suppose that $0<K<S_0$. Then
	\[
	V_0^{\F}=\E[(S_T-K)^+],
	\]
	whereas
	\[
	\E[V_0^{\G}]-V_0^{\F}
	\geq
	(S_0-K)\Pp(S_T\leq K)>0.
	\]
	In particular, terminal information
	strictly increases the expected
	optimal exercise value.
\end{proposition}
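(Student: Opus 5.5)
The plan has two parts. First, in $\F$, I will show that early exercise is never optimal for a call on a martingale. Second, in $\G$, I will exhibit a single $\G$-stopping time whose reward pathwise dominates $(S_T-K)^+$, with strict gain on $\{S_T\le K\}$.

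\textbf{Step 1 (the $\F$-value).} Since $S$ is an $\F$-martingale and $x\mapsto(x-K)^+$ is convex and Lipschitz, the process $X_t=(S_t-K)^+$ is a nonnegative $\F$-submartingale. Square integrability of $S$ and Doob's inequality give $\E\sup_{t\le T}X_t^2<\infty$. For any $\tau\in\cT_{0,T}^{\F}$ the optional stopping theorem for submartingales on the bounded interval $[0,T]$ yields $\E[X_\tau]\le\E[X_T]$. Hence $V_0^{\F}\le\E[(S_T-K)^+]$. The reverse inequality follows by taking $\tau=T$. Because $\mathcal F_0$ is trivial, $V_0^{\F}$ is deterministic and equals $\E[(S_T-K)^+]$.

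\textbf{Step 2 (a $\G$-stopping time).} Define $\tau=T\mathbf 1_{\{S_T>K\}}$, so that $\tau=0$ on $\{S_T\le K\}$. Since $S_T$ is a function of $B_T$, which is $\mathcal G_0$-measurable, the event $\{S_T>K\}$ lies in $\mathcal G_0$ and $\tau$ is a $\G$-stopping time. Its reward is
\[
X_\tau=(S_T-K)^+\mathbf 1_{\{S_T>K\}}+(S_0-K)\mathbf 1_{\{S_T\le K\}}
=(S_T-K)^+ +(S_0-K)\mathbf 1_{\{S_T\le K\}},
\]
using $S_0>K$ and $(S_T-K)^+=0$ on $\{S_T\le K\}$. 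Since $X_\tau$ is $\mathcal G_0$-measurable, $V_0^{\G}\ge\E[X_\tau\mid\mathcal G_0]=X_\tau$. Taking expectations and subtracting Step 1 gives
\[
\E[V_0^{\G}]-V_0^{\F}\ge(S_0-K)\Pp(S_T\le K).
\]

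\textbf{Step 3 (strict positivity).} We have $S_0-K>0$ by assumption. The event $\{S_T\le K\}$ equals $\{\sigma B_T\le\log(K/S_0)+\sigma^2T/2\}$, which has positive probability because $B_T$ is nondegenerate Gaussian. The main point needing care is Step 1, specifically justifying optional stopping for the submartingale over arbitrary $[0,T]$-valued stopping times. This is routine here because $X$ is continuous and dominated by $\sup_t S_t+K\in L^2$, so it is of class (D). Integrability of $V_0^{\G}$ is not needed, since the inequality is taken in the extended sense with $V_0^{\G}\ge X_\tau\ge0$. Alternatively, integrability follows from $V_0^{\G}\le\E[\sup_tX_t\mid\mathcal G_0]$.
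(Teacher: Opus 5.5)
Your proof is correct and follows the paper's route: you obtain $V_0^{\F}$ from the submartingale property of $(S_t-K)^+$ and optional sampling, and then use a $\G$-stopping time taking values in $\{0,T\}$, decided at time $0$ from $B_T$, whose reward is $\mathcal G_0$-measurable. The only difference is the exercise event: you stop at $0$ on $\{S_T\le K\}$, which gives the stated bound $(S_0-K)\Pp(S_T\le K)$ directly, whereas the paper stops on the larger event $\{(S_T-K)^+<S_0-K\}$, obtains the reward $\max\{S_0-K,(S_T-K)^+\}$ and the sharper intermediate bound $\E[(S_0-K-(S_T-K)^+)^+]$, and only then restricts to $\{S_T\le K\}$.
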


\begin{proof}
	Since $S$ is an $\F$-martingale
	and $x\mapsto(x-K)^+$ is convex,
	$X$ is an $\F$-submartingale.
	The optional sampling theorem
	therefore implies
	\[
	\E[X_\tau]\leq\E[X_T],
	\qquad
	\tau\in\mathcal T_{0,T}^{\F}.
	\]
	Since $\tau=T$ is admissible,
	\[
	V_0^{\F}=\E[(S_T-K)^+].
	\]

	In the enlarged filtration $\G$, the terminal
	asset price
	\[
	S_T=S_0\exp\left(
	\sigma B_T-\frac12\sigma^2T
	\right)
	\]
	is $\mathcal G_0$-measurable, since
	$\mathcal G_0=\sigma(B_T)$ up to completion.
	Consequently, the informed investor knows
	the terminal payoff $(S_T-K)^+$ at time zero.
	
	Let
	\[
	A=\{(S_T-K)^+<S_0-K\}.
	\]
	Since $S_T$ is $\mathcal G_0$-measurable,
	we have $A\in\mathcal G_0$. Define
	\[
	\tau^*
	=0\,\mathbf 1_A+T\,\mathbf 1_{A^c}.
	\]
	This random time is a $\G$-stopping time.
	Indeed, for every $0\leq t<T$,
	\[
	\{\tau^*\leq t\}=A\in\mathcal G_0
	\subseteq\mathcal G_t,
	\]
	while $\{\tau^*\leq T\}=\Omega$.
	Thus $\tau^*\in\mathcal T_{0,T}^{\G}$.
	
	By construction, the investor exercises
	immediately on $A$, when the immediate
	payoff exceeds the terminal payoff, and
	waits until maturity on $A^c$. Therefore,
	\[
	\begin{aligned}
		X_{\tau^*}
		&=X_0\mathbf 1_A+X_T\mathbf 1_{A^c}\\
		&=(S_0-K)\mathbf 1_A
		+(S_T-K)^+\mathbf 1_{A^c}\\
		&=\max\{S_0-K,(S_T-K)^+\}.
	\end{aligned}
	\]
	Since both $S_T$ and $\tau^*$ are
	$\mathcal G_0$-measurable, the payoff
	$X_{\tau^*}$ is also
	$\mathcal G_0$-measurable. Hence
	\[
	\E[X_{\tau^*}\mid\mathcal G_0]
	=X_{\tau^*}.
	\]
	By the definition of the Snell envelope,
	\[
	\begin{aligned}
		V_0^{\G}
		&=\operatorname*{ess\,sup}_{
			\tau\in\mathcal T_{0,T}^{\G}}
		\E[X_\tau\mid\mathcal G_0]\\
		&\geq
		\E[X_{\tau^*}\mid\mathcal G_0]\\
		&=\max\{S_0-K,(S_T-K)^+\}.
	\end{aligned}
	\]
	
	Taking expectations on both sides yields
	\[
	\E[V_0^{\G}]
	\geq
	\E\left[\max\{S_0-K,(S_T-K)^+\}\right].
	\]
	Using the elementary identity
	$
	\max\{a,b\}=b+(a-b)^+,
	$
	we obtain
	\[
	\begin{aligned}
		\E[V_0^{\G}]
		&\geq \E[(S_T-K)^+]\\
		&\quad+
		\E\left[
		\big(S_0-K-(S_T-K)^+\big)^+
		\right].
	\end{aligned}
	\]
	Since
	\[
	V_0^{\F}=\E[(S_T-K)^+],
	\]
	it follows that
	\[
	\E[V_0^{\G}]-V_0^{\F}
	\geq
	\E\left[
	\big(S_0-K-(S_T-K)^+\big)^+
	\right].
	\]
	
	Now consider the event
	\[
	C=\{S_T\leq K\}.
	\]
	On $C$, we have $(S_T-K)^+=0$.
	Moreover, since $0<K<S_0$,
	\[
	\big(S_0-K-(S_T-K)^+\big)^+
	=S_0-K
	\qquad\text{on }C.
	\]
	As the integrand is nonnegative,
	\[
	\begin{aligned}
		&\E\left[
		\big(S_0-K-(S_T-K)^+\big)^+
		\right]\\
		&\quad\geq
		\E\left[
		\big(S_0-K-(S_T-K)^+\big)^+
		\mathbf 1_C
		\right]\\
		&\quad=(S_0-K)\Pp(S_T\leq K).
	\end{aligned}
	\]
	Consequently,
	\[
	\E[V_0^{\G}]-V_0^{\F}
	\geq
	(S_0-K)\Pp(S_T\leq K).
	\]
	
	Finally, since
	\[
	\log\left(\frac{S_T}{S_0}\right)
	=\sigma B_T-\frac12\sigma^2T
	\]
	and $B_T\sim\mathcal N(0,T)$,
	we obtain
	\[
	\Pp(S_T\leq K)
	=
	\Phi\left(
	\frac{\log(K/S_0)+\frac12\sigma^2T}
	{\sigma\sqrt T}
	\right)>0,
	\]
	where $\Phi$ denotes the standard normal
	distribution function.
	
	Since $S_0-K>0$, we conclude that
	$
	\E[V_0^{\G}]>V_0^{\F}.
	$
	This completes the proof.
	
\end{proof}

\begin{remark}
	The processes $V^{\F}$ and $V^{\G}$
	are the Snell envelopes of the same
	reward under different filtrations.
	They therefore correspond to the
	first components of the respective
	regular zero-driver reflected BSDEs.
	
	The preceding comparison concerns
	optimal stopping under the fixed
	probability measure $\Pp$, rather
	than arbitrage-free prices in two
	financial markets. In particular,
	advance knowledge of $S_T$ may
	change the no-arbitrage properties
	of the underlying trading model.
\end{remark}

\subsection{Early exercise and reflected
	BSDE transfer}

We next consider an exercise reward
for which the reflection mechanism
is active in the original filtration.
This example also illustrates the
difference between transferring
a reflected equation to $\G$
and solving a regular reflected
equation directly in $\G$.

Let
\[
X_t=S_t+c(T-t),\qquad c>0.
\]
The term $c(T-t)$ represents a
decreasing early-exercise premium.
This defines an American-style
exercise contract rather than a
standard vanilla call option.

\begin{proposition}
	[Active reflection and enlarged information]
	\label{prop:financial_reflection}
	
	The Snell envelope of $X$ in $\F$
	is given by
	\[
	V_t^{\F}=S_t+c(T-t).
	\]
	The corresponding zero-driver
	reflected BSDE has the unique solution
	\[
	\begin{aligned}
		Y_t^{\F}&=S_t+c(T-t),\\
		Z_t^{\F}&=\sigma S_t,\\
		K_t^{\F}&=ct.
	\end{aligned}
	\]
	
	Moreover, its transfer to $\G$
	satisfies the reflected equation
	with singular generator
	$-z\alpha_t$.
	
	By contrast, the regular
	zero-driver Snell envelope in $\G$
	satisfies
	\[
	\E[V_0^{\G}]-V_0^{\F}
	\geq
	\E[(S_T-S_0-cT)^+]>0.
	\]
	In particular,
	\[
	\Pp(V_0^{\G}>Y_0^{\F})>0.
	\]
	Thus the transferred solution
	does not coincide with the regular
	optimal stopping value in $\G$.
\end{proposition}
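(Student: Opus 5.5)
Everything splits into an $\F$-part and a $\G$-part.

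\emph{The $\F$-part.} Since $S$ is an $\F$-martingale, for any $\tau\in\cT_{t,T}^{\F}$ optional sampling gives
\[
\E[X_\tau\mid\mathcal F_t]=S_t+c\,\E[T-\tau\mid\mathcal F_t]\le S_t+c(T-t)=X_t .
\]
The choice $\tau=t$ attains this bound, so $V_t^{\F}=X_t$. To identify the reflected BSDE, write $Y_t=S_t+c(T-t)$. Then $dY_t=\sigma S_t\,dB_t-c\,dt$, which is exactly the forward form of \eqref{eq:FRBSDE} with $f=0$, $Z_t=\sigma S_t$, $K_t=ct$. The terminal condition $Y_T=S_T=X_T$ holds. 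The obstacle constraint holds with equality ($Y=X$), so the Skorokhod condition $\int_0^T(Y_t-X_t)\,dK_t=0$ is trivially satisfied. For integrability, $S\in\cS^2(\F)$ by Doob, $\E\int_0^T\sigma^2S_t^2\,dt<\infty$ since $S$ is lognormal, and $K_T=cT$ is bounded. Uniqueness then follows from the classical reflected BSDE theorem of El Karoui et al.\ quoted in Section~\ref{sec5}, whose hypotheses hold because $X$ is continuous with $\E\sup_t(X_t^+)^2<\infty$.

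\emph{The transfer.} I will invoke Theorem~\ref{thm:reflecttransfer}, which needs $\int_0^T|\sigma S_s\alpha_s|\,ds<\infty$ a.s. This holds because $S$ has continuous, hence pathwise bounded, trajectories and $\int_0^T|\alpha_s|\,ds<\infty$ a.s.\ by Lemma~\ref{lem:drift}. So the triple solves the $\G$-reflected equation with generator $-z\alpha_t$.

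\emph{The $\G$-part.} This mirrors Proposition~\ref{prop:financial_call}. Let $A=\{S_T\le S_0+cT\}\in\mathcal G_0$ and $\tau^*=0\cdot\mathbf 1_A+T\mathbf 1_{A^c}$; this is a $\G$-stopping time because $\{\tau^*\le t\}=A\in\mathcal G_0$ for $t<T$. Its payoff is
\[
X_{\tau^*}=\max\{S_0+cT,\;S_T\},
\]
which is $\mathcal G_0$-measurable. Hence
\[
V_0^{\G}\ge \max\{S_0+cT,S_T\}=S_0+cT+(S_T-S_0-cT)^+ .
\]
Taking expectations and using $V_0^{\F}=S_0+cT$ gives the stated bound. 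Strict positivity of $\E[(S_T-S_0-cT)^+]$ follows because $S_T$ is lognormal with unbounded support, so $\Pp(S_T>S_0+cT)>0$.

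For the final claim, $Y_0^{\F}=S_0+cT$ is deterministic. From the pathwise bound, $V_0^{\G}>Y_0^{\F}$ on $\{S_T>S_0+cT\}$, which has positive probability.

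\emph{Main obstacle.} The only delicate point is checking the hypotheses for uniqueness and transfer. For uniqueness one must note that the obstacle is continuous with square-integrable supremum and that $\xi=X_T$. For the transfer, the pathwise integrability of $Z^{\F}\alpha$ comes from the bounded trajectories of $S$ rather than from uniform boundedness of $Z^{\F}$. I expect both checks to be routine.
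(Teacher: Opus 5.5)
Your proposal is correct and follows the paper's proof step for step. You identify $V^{\F}=X$ from the supermartingale property, making the optional-sampling bound explicit, and verify the triple directly, citing El Karoui et al.\ for uniqueness. For the transfer you use the pathwise boundedness of $\sigma S$ together with $\int_0^T|\alpha_s|\,ds<\infty$, and for the lower bound on $V_0^{\G}$ the $\mathcal G_0$-measurable stopping time $\tau^*\in\{0,T\}$. The only differences are immaterial: you use a non-strict inequality in defining $A$, and your square-integrability checks are slightly more complete than the paper's write-up.
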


\begin{proof}
	Since $S$ is an $\F$-martingale,
	the process
	$X_t=S_t+c(T-t)$
	is an $\F$-supermartingale.
	Its Snell envelope therefore
	coincides with $X$.
	
	Furthermore,
	\[
	dY_t^{\F}
	=
	\sigma S_t\,dB_t-c\,dt
	=
	Z_t^{\F}\,dB_t-dK_t^{\F}.
	\]
	The terminal condition is
	$Y_T^{\F}=S_T$, and the obstacle
	condition is satisfied with equality:
	\[
	Y_t^{\F}=X_t.
	\]
	Hence
	\[
	\int_0^T(Y_t^{\F}-X_t)\,dK_t^{\F}=0.
	\]
	The uniqueness follows from the
	standard well-posedness theorem
	for reflected BSDEs in $\F$.
	
	Recall that
	\[
	dB_t=dW_t+\alpha_t\,dt,
	\qquad t<T.
	\]
	The process $Z^{\F}=\sigma S$
	has bounded continuous trajectories
	on $[0,T]$ almost surely, and
	\[
	\int_0^T|\alpha_t|\,dt<\infty
	\qquad\text{a.s.}
	\]
	Consequently,
	\[
	\int_0^T|Z_t^{\F}\alpha_t|\,dt
	<\infty\qquad\text{a.s.}
	\]
	The transferred triple therefore
	satisfies
	\[
	\begin{aligned}
		Y_t^{\F}
		={}&S_T
		-\int_t^T Z_s^{\F}\alpha_s\,ds\\
		&+K_T^{\F}-K_t^{\F}
		-\int_t^T Z_s^{\F}\,dW_s,
	\end{aligned}
	\]
	together with the same obstacle
	and Skorokhod conditions.

	On the other hand, since $S_T$ is
	$\mathcal G_0$-measurable, the informed
	investor knows the terminal payoff at
	time zero. Consider the random time
	\[
	\tau^*
	=
	\begin{cases}
		0, & S_T<S_0+cT,\\
		T, & S_T\geq S_0+cT.
	\end{cases}
	\]
	Since $\{S_T<S_0+cT\}\in\mathcal G_0$,
	it follows that $\tau^*$ is a
	$\G$-stopping time. Moreover,
	\[
	X_{\tau^*}
	=
	\max\{S_0+cT,S_T\}.
	\]
	As this payoff is $\mathcal G_0$-measurable,
	the definition of the Snell envelope yields
	\[
	\begin{aligned}
		V_0^{\G}
		&\geq
		\E[X_{\tau^*}\mid\mathcal G_0]\\
		&=\max\{S_0+cT,S_T\}.
	\end{aligned}
	\]
	Taking expectations and using
	$V_0^{\F}=S_0+cT$, we obtain
	\[
	\begin{aligned}
		\E[V_0^{\G}]-V_0^{\F}
		&\geq
		\E[\max\{S_0+cT,S_T\}]
		-(S_0+cT)\\
		&=\E[(S_T-S_0-cT)^+].
	\end{aligned}
	\]
	Finally, since $S_T$ has a lognormal
	distribution with support $(0,\infty)$,
	\[
	\Pp(S_T>S_0+cT)>0.
	\]
	Therefore,
	\[
	\E[(S_T-S_0-cT)^+]>0,
	\]
	which establishes the strict inequality.

	Finally, on the event
	$A=\{S_T>S_0+cT\}$,
	which has positive probability,
	\[
	V_0^{\G}
	\geq S_T
	>S_0+cT
	=Y_0^{\F}.
	\]
	Thus the transferred first
	component cannot coincide
	with the regular Snell envelope
	in $\G$.
\end{proof}

\begin{remark}
	This example distinguishes two
	different operations. The first
	transfers an $\F$-reflected BSDE
	to the enlarged filtration by
	introducing the singular drift
	term $-z\alpha_t$. The second
	solves a regular zero-driver
	optimal stopping problem in $\G$.
	
	Although the transferred triple
	preserves the obstacle and
	Skorokhod conditions, it need
	not represent the Snell envelope
	under the enlarged information.
	The two equations have different
	generators, and their solutions
	need not coincide.
\end{remark}

\subsection{European contingent claims
	and conditional valuation}

We finally consider a European
contingent claim, for which exercise
is restricted to maturity.
In this setting, terminal
information affects conditional
valuation and martingale
representation rather than
the choice of an exercise time.

Let
\[
\xi=(S_T-K)^+,\qquad K>0,
\]
be the payoff of a European call.

\begin{proposition}
	[Conditional valuation of a European call]
	\label{prop:financial_european}
	
	For the regular zero-driver BSDE
	in $\F$, the solution is
	\[
	Y_t^{\F}
	=
	S_t\Phi(d_1(t))-K\Phi(d_2(t)),
	\qquad t<T,
	\]
	where $\Phi$ is the standard
	normal distribution function,
	\[
	d_1(t)=
	\frac{\log(S_t/K)
		+\frac12\sigma^2(T-t)}
	{\sigma\sqrt{T-t}},
	\]
	and
	\[
	d_2(t)=d_1(t)-\sigma\sqrt{T-t}.
	\]
	The martingale integrand is
	\[
	Z_t^{\F}
	=\sigma S_t\Phi(d_1(t)).
	\]
	
	In the enlarged filtration $\G$,
	the regular zero-driver BSDE
	driven by $W$ has the solution
	\[
	Y_t^{\G}=\xi,\qquad Z_t^{\G}=0.
	\]
	Consequently,
	\[
	\E[Y_t^{\G}\mid\mathcal F_t]
	=Y_t^{\F}.
	\]
\end{proposition}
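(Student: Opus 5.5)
The plan is to reduce everything to Proposition~\ref{prop:endpoint}, applied with $h(x)=(S_0e^{\sigma x-\frac12\sigma^2T}-K)^+$, so that $\xi=h(B_T)$. Since $h(x)\le S_0e^{\sigma x}$ and $B_T$ is Gaussian, $h(B_T)\in L^2$. Proposition~\ref{prop:endpoint} then gives three things. First, $Y_t^{\F}=\E[\xi\mid\mathcal F_t]$. Second, the $\G$-solution driven by $W$ is $(Y^{\G},Z^{\G})=(\xi,0)$; this is unique by the well-posedness of regular BSDEs in $\G$, since $\xi$ is $\mathcal G_0$-measurable. Third, the projection identity $\E[Y_t^{\G}\mid\mathcal F_t]=Y_t^{\F}$. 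This settles the $\G$-part and the final identity at once, so the remaining work is the explicit formulas in $\F$.

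For the formula for $Y^{\F}$, I will write $S_T=S_t\exp(\sigma(B_T-B_t)-\frac12\sigma^2(T-t))$, where $B_T-B_t\sim\mathcal N(0,T-t)$ is independent of $\mathcal F_t$. This gives $Y_t^{\F}=u(t,S_t)$ with $u(t,s)=\E[(s e^{\sigma\sqrt{T-t}N-\frac12\sigma^2(T-t)}-K)^+]$. The standard Gaussian computation splits the expectation on the event $\{N>-d_2(t)\}$. The $s$-term is handled by the change of variable $N\mapsto N+\sigma\sqrt{T-t}$, which produces $s\Phi(d_1)$, and the $K$-term gives $K\Phi(d_2)$. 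This is the Black--Scholes formula with zero rate.

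The main obstacle is identifying $Z^{\F}$, because $h$ is not in $C_b^1$ (it is unbounded and has a kink at $\log(K/S_0)/\sigma+\frac12\sigma T$). So the heat-semigroup clause of Proposition~\ref{prop:endpoint} cannot be quoted verbatim. I will instead argue directly on $[0,T-\varepsilon]$. There the function $v(t,x)=u(t,S_0e^{\sigma x-\frac12\sigma^2t})=P_{T-t}h(x)$ is $C^{1,2}$, because Gaussian convolution smooths any function of exponential growth. It also satisfies $\partial_tv+\frac12\partial_{xx}v=0$, either by differentiating under the integral against the heat kernel or by citing the same Karatzas--Shreve reference.

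On $[0,T-\varepsilon]$, It\^o's formula then gives $dY_t^{\F}=\partial_xv(t,B_t)\,dB_t$. Next I compute $\partial_xv=\sigma s\,\partial_su$ and use the classical delta identity $\partial_su(t,s)=\Phi(d_1(t))$. The terms coming from the derivatives of $d_1,d_2$ cancel, because $s\varphi(d_1)=K\varphi(d_2)$. Alternatively, I can differentiate under the expectation using a.e.\ differentiability and the domination $|h'(x)|\le\sigma S_0e^{\sigma x}$, which gives $\partial_xv=P_{T-t}h'$; evaluating that integral again yields $\sigma S_t\Phi(d_1(t))$.

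Finally, the uniqueness of the Brownian martingale representation on each $[0,T-\varepsilon]$ identifies $Z_t^{\F}=\sigma S_t\Phi(d_1(t))$ for $dt\otimes d\Pp$-a.e.\ $t<T-\varepsilon$. Letting $\varepsilon\downarrow0$ extends this to all $t<T$. As a check, this $Z^{\F}$ lies in $\cH^2(\F)$, since $|Z_t^{\F}|\le\sigma S_t$ and $S$ is square integrable.
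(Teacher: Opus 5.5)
Your proposal is correct and follows essentially the same route as the paper. Both arguments use Gaussian integration for the Black--Scholes formula, then It\^o's formula on $u(t,S_t)$ together with $\partial_s u=\Phi(d_1)$ to identify $Z^{\F}$ (you work equivalently with $v(t,B_t)=P_{T-t}h(B_t)$), and finally $\mathcal G_0$-measurability of $\xi$ plus the tower property for the $\G$-part. Your localization to $[0,T-\varepsilon]$ and your explicit appeal to uniqueness of the martingale integrand supply justifications the paper leaves implicit, which are needed because $h\notin C_b^1$ and the heat-semigroup clause of Proposition~\ref{prop:endpoint} does not apply verbatim.
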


\begin{proof}
	The conditional distribution
	of $S_T$ given $\mathcal F_t$
	is lognormal. Standard Gaussian
	integration gives
	\[
	\E[(S_T-K)^+\mid\mathcal F_t]
	=
	S_t\Phi(d_1(t))-K\Phi(d_2(t)).
	\]
	Writing this expression as
	$u(t,S_t)$ and applying
	It\^o's formula yields
	\[
	dY_t^{\F}
	=
	\sigma S_t\partial_su(t,S_t)\,dB_t.
	\]
	Since
	\[
	\partial_su(t,S_t)=\Phi(d_1(t)),
	\]
	we obtain
	\[
	Z_t^{\F}
	=\sigma S_t\Phi(d_1(t)).
	\]
	
	In $\G$, the terminal payoff
	$\xi$ is $\mathcal G_0$-measurable,
	because it is a function of $B_T$.
	Therefore $(\xi,0)$ solves
	the regular zero-driver BSDE
	driven by $W$.
	
	Finally,
	\[
	\E[Y_t^{\G}\mid\mathcal F_t]
	=
	\E[\xi\mid\mathcal F_t]
	=
	Y_t^{\F}.
	\]
\end{proof}

\begin{remark}
	The preceding identity is a
	concrete financial illustration
	of the conditional projection
	result for endpoint-dependent
	payoffs.
	
	In the natural filtration,
	the martingale integrand is
	generally nonzero, whereas
	the payoff is already known
	at time zero in the enlarged
	filtration and the regular
	zero-driver representation
	has a vanishing martingale
	integrand.
	
	These conditional expectations
	should not automatically be
	interpreted as arbitrage-free
	prices or replicating values
	in an insider trading market.
\end{remark}

\section{Irregular obstacles: scope and research questions}
\label{sec8}
The preceding transfer results concern reflected
BSDEs with continuous obstacles. When the obstacle
is merely optional, the solution need not be
right-continuous, and the reflection mechanism
must account for both predictable and purely
discontinuous increasing processes. The appropriate
framework is based on the Mertens decomposition
of strong optional supermartingales; see
Grigorova et al.~\cite{GIOOQ,GIOQ}.

In a Brownian filtration, the corresponding
decomposition may be written schematically as
\[
Y_t=Y_0-\int_0^t f(s,Y_s,Z_s)\,ds
+\int_0^t Z_s\,dB_s-A_t-C_{t-},
\]
where $A$ is a predictable, right-continuous,
nondecreasing process, while $C$ is an adapted,
right-continuous, purely discontinuous
nondecreasing process. The reflection mechanism
is characterized by separate minimality
conditions governing the increments of $A$
and $C$. The complete definitions and
well-posedness theory are not reproduced here.

Under initial enlargement, the original
processes remain adapted, while the Brownian
martingale component acquires an information
drift. The pathwise reflection identities are
preserved for a transferred solution, but
the singularity of this drift raises additional
questions concerning admissibility, uniqueness,
and the characterization of the corresponding
reflected dynamics.

\begin{problem}[Transfer of irregular reflected BSDEs]
	Establish a transfer theorem for reflected BSDEs
	with optional obstacles, incorporating both
	increasing processes $A$ and $C$ in the Mertens
	decomposition. Identify integrability conditions
	ensuring that the transformed equation is
	well defined and determine appropriate
	solution classes in which uniqueness may hold
	despite the singular information drift.
\end{problem}

\begin{problem}[Projection and nonlinear Snell envelopes]
	For an optional reward process $X$ and a
	Lipschitz generator $f$, investigate conditions
	under which the $\F$-optional projection
	of the $\G$-$f$-Snell envelope retains an
	$f$-supermartingale property. More generally,
	determine whether the projected process admits
	a reflected BSDE representation with an
	explicit correction term accounting for
	the additional terminal information.
\end{problem}

\begin{problem}[Initial enlargement by a general terminal signal]
	Replace $B_T$ by a general random variable
	$L\in\mathcal F_T$. Under suitable density
	hypotheses, characterize the information
	drift and the associated innovation process.
	Determine conditions under which the
	innovation retains the predictable
	representation property, and investigate
	existence and uniqueness for the corresponding
	reflected BSDEs, including those with
	irregular obstacles.
\end{problem}

\section*{Conclusion}

The initial enlargement of the Brownian
filtration by $B_T$ reveals two distinct
features: the existence of an innovation
Brownian motion, independent of the terminal
information and possessing the predictable
representation property, and the appearance
of a singular information drift in the
semimartingale decomposition of the original
Brownian motion.

This distinction is essential for
understanding the well-posedness of BSDEs.
While regular equations driven by the
innovation process admit unique solutions
under standard assumptions, the corresponding
singular equations may exhibit nonuniqueness.
The conditional parametrization provides
a representation of regular BSDEs through
a family of Brownian equations.

For reflected BSDEs, the transfer principle
preserves the obstacle and Skorokhod
minimality conditions, although uniqueness
need not hold for the transformed generator.
From an optimal stopping perspective,
the value of terminal information is
expressed through a conditional comparison
of Snell envelopes rather than a general
pathwise domination. In the zero-driver
case, endpoint payoffs also admit an exact
conditional projection identity.

The financial applications illustrate
these results through optimal exercise
and conditional valuation under asymmetric
information. In particular, they highlight
the difference between transferring a
reflected BSDE to the enlarged filtration
and solving a regular optimal stopping
problem directly in that filtration.
The corresponding solutions need not
coincide, reflecting the different
dynamics of the two formulations.

Finally, extending the analysis to
irregular optional obstacles raises
further questions concerning Mertens
decomposition, singular-driver
integrability, and uniqueness.
The study of more general terminal
signals and nonlinear Snell envelopes
offers additional directions for
future research.

\section*{Acknowledgements}
The authors are grateful to Professor
Monique Jeanblanc for her kind communication
to Youssef Ouknine of an unpublished note
on the shrinkage of filtrations and
applications to BSDEs \cite{JeanblancPC}.

\section*{Conflict of Interest}
The authors declare that they have no conflicts of interest.

\section*{Funding}
The authors received no financial support for the research, authorship, or publication of this work.

\end{document}